\newtheorem{thm}{Theorem}[section]
\newtheorem{lemma}{Lemma}[section]
\newtheorem{definition}{Definition}[section]
\newtheorem{corollary}{Corollary}[section]
\newtheorem{claim}{Claim}[section]
\numberwithin{equation}{section}
\newcommand{\R}{\mathbb{R}}
\newcommand{\Z}{\mathbb{Z}}
\newcommand{\Sone}{S^1}
\newcommand{\C}{\mathbb{C}}
\newcommand{\HH}{L^2(\mathbb{R})}
\newcommand{\intR}{\int_{\mathbb{R}}}
\newcommand{\eqae}{\overset{\textit{a.e.}}{=}}
\newcommand{\Rstar}{\R\backslash\{0\}}
\newcommand{\LL}{\mathscr{L}(\HH)}
\newcommand{\UU}{\mathscr{U}(\HH)}
\newcommand{\HHR}{L^2_R(\mathbb{R})}
\newcommand{\Rplus}{(0,\infty)}
\newcommand{\allf}{\quad f\in\HH}
\newcommand{\alls}{\quad s\in\R}
\newcommand{\aexi}{\quad a.e.\ \xi\in\R}
\newcommand{\ab}{\alpha,\beta}
\begin{document}

\title[Semistable unitary operators on $L^2(\mathbb{R})$]
{\small{A note on semistable unitary operators on $L^2(\mathbb{R})$}}
\author{Xianghong Chen}
\address{Xianghong Chen, Department of Mathematics, Sun Yat-sen University, Guangzhou, 510275, P.R. China}
\email{chenxiangh@mail.sysu.edu.cn}
\subjclass[2020]{47D03, 35Q41}
\keywords{semistable, unitary operator, Schr\"odinger equation} 
\date{\today}
\dedicatory{} 
\commby{} 


\begin{abstract}
In this note, we present a characterization of semistable unitary operators on $L^2(\mathbb{R})$, under the assumption that the operator is 
(i) translation-invariant, 
(ii) symmetric, and 
(iii) locally uniformly continuous (LUC) under dilation. 

As a consequence, we characterize one-parameter groups formed by such operators, which are of the form 
$e^{i\beta t|{d}/{dx}|^\alpha}$, with $\alpha,\beta\in\mathbb R$. 
\end{abstract}

\maketitle

\tableofcontents


\section{Introduction}
\label{sec:intro}
\vspace{1em}

\subsection{Notation}
\label{subsec:notation}

In this note, by ``measurable'' we always mean ``Lebesgue measurable''. We use $\Sone$ to denote $\{z\in\C: |z|=1\}$. 
The Hilbert space $\HH$ is defined as
\begin{align*}
\big\{f:\R\rightarrow\C \ \big|\ \text{$f$ is measurable, } \|f\|_2^2=\intR |f(x)|^2 dx<\infty\big\}
/\{f: \|f\|_2=0\}.
\end{align*}
For $f,g\in\HH$, we write $f=g$ if $[f]=[g]$, that is, $f\eqae g$; $\langle f,g\rangle$ denotes 
$$\intR f(x)\overline{g(x)}dx.$$ 
Denote by $\LL$ the space of bounded linear operators from $\HH$ to itself, and denote 
$$\UU=\{T\in\LL: T \text{ is unitary}\}.$$
For $f\in\HH$, $\lambda\in\R\backslash\{0\}$, and $a\in\R$, denote 
$$\delta_\lambda f(x)=\frac{1}{|\lambda|}f\big(\frac{x}{\lambda}\big), $$
$$\tau_a f(x)=f(x-a).$$ 
For $T\in\LL$ and $\lambda\in\Rstar$, denote 
$$T_\lambda=\delta_\lambda\, T\,\delta_{\lambda^{-1}}.$$

For $f\in\HH$, the Fourier transform is defined by 
$$\hat f(\xi)=\frac{1}{\sqrt{2\pi}}\intR f(x)e^{-i\xi x},\quad \xi\in\R.$$
For $R>1$, define 
$$\HHR=\{f\in\HH: \hat f(\xi)=0,\ a.e.\ \xi \text{ with } |\xi|<R^{-1} \text{ or }|\xi|>R\}.$$

\subsection{Definitions}
\label{subsec:definitions}

We now collect some necessary definitions for the main results. The first definition is motivated by the notion of semistable distribution in probability theory. 

\begin{definition}[semistable operator]
\label{def:semistable}
An operator $T\in\LL$ is called \emph{semistable} if there exist constants $a, b>0$, such that 
\begin{align*}
\begin{cases}
T^2=T_a,\\
T^3=T_b.
\end{cases}
\end{align*}
\end{definition}

The next definition will be a key technical condition, in which ``local'' should be understood on the Fourier side, and ``uniform'' should be understood in terms of operator topology. 

\begin{definition}[local uniform continuity, LUC]\label{def:LUC}
\ \\
$(i)$ An operator family $\{T(\lambda)\in\LL\}_{\lambda\in\Rplus}$ is called \emph{locally uniformly continuous} if for any $R>1$, the restricted operator 
$$T(\lambda): \HHR\rightarrow\HH$$
is continuous in $\lambda\in\Rplus$ with respect to the operator norm. \\
$(ii)$ An operator $T\in\LL$ is called \emph{locally uniformly continuous (LUC) under dilation} if the operator family $\{T_\lambda\}_{\lambda\in\Rplus}$ is locally uniformly continuous. 
\end{definition}

The following definitions are standard. 

\begin{definition}
\label{def:translation-inv}
Let $T$ be an operator in $\LL$. \\
$(i)$ $T$ is called \emph{translation-invariant} if 
$\tau_{-a}\, T\, \tau_{a}=T$ holds for all $a\in\R$. \\
$(ii)$ $T$ is called \emph{symmetric} if 
$T_{-1}=T$. 
\end{definition}

Finally, we recall the definition of weakly measurable operator group. 

\begin{definition}
\label{def:op-group}
$(i)$ An operator family $\{T(t)\in\LL\}_{t\in\R}$ is called an \emph{operator group} if it satisfies $T(0)=I\, (\text{identity operator})$ and 
\begin{equation}\label{eq:group-property-0}
T(t_1+t_2)=T(t_1)\,T(t_2),\quad t_1, t_2\in\R.
\end{equation}
$(ii)$ An operator family $\{T(t)\in\LL\}_{t\in\R}$ is called \emph{weakly measurable} if for any $f, g\in\HH$, the function 
$t\mapsto\langle T(t)f,g\rangle$ 
is measurable on $\R$. 
\end{definition}

\subsection{Statements of results}
\label{subsec:results}

The first main result of this note gives a characterization of semistable unitary operators on $L^2(\mathbb{R})$. 

\begin{thm}
\label{thm:1}
Let $T\in\UU$ be a semistable unitary operator. If $T$ is 
$(i)$ translation-invariant, 
$(ii)$ symmetric, and 
$(iii)$ LUC under dilation, 
then there exist constants $\alpha, \beta\in\R$, such that 
\begin{equation}\label{eq:T-hat}
\widehat{Tf}(\xi)=e^{i\beta|\xi|^\alpha}\hat f(\xi),\quad f\in\HH.
\end{equation}
Moreover, if $T\neq I$, then the constants $\alpha, \beta$ are uniquely determined by $T$.
\end{thm}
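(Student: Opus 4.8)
The plan is to exploit the three hypotheses to conjugate the problem to the Fourier side, where translation-invariant operators on $\HH$ become multiplication operators. First I would invoke the standard fact that a bounded translation-invariant operator $T$ on $\HH$ is a Fourier multiplier: there exists $m\in L^\infty(\R)$ with $\widehat{Tf}(\xi)=m(\xi)\hat f(\xi)$, and since $T$ is unitary, $|m(\xi)|=1$ a.e., so $m(\xi)=e^{i\varphi(\xi)}$ for some real measurable phase $\varphi$. The symmetry hypothesis $T_{-1}=T$ forces $m$ to be even, i.e. $\varphi(-\xi)=\varphi(\xi)$ a.e., so it suffices to determine $\varphi$ on $(0,\infty)$. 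The goal is then to show $\varphi(\xi)=\beta|\xi|^\alpha$ a.e.

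The heart of the argument is translating the semistability relations $T^2=T_a$ and $T^3=T_b$ into functional equations for $\varphi$. Since conjugation by $\delta_\lambda$ acts on the Fourier multiplier of $T$ by rescaling the frequency variable — one checks that $T_\lambda$ has multiplier $m(\lambda^{-1}\xi)$ (or $m(\lambda\xi)$, depending on the normalization of $\delta_\lambda$) — the equations $T^2=T_a$, $T^3=T_b$ become, respectively,
\begin{align*}
2\varphi(\xi)&=\varphi(c\,\xi)\ \ \text{a.e.},\\
3\varphi(\xi)&=\varphi(d\,\xi)\ \ \text{a.e.},
\end{align*}
for suitable constants $c,d>0$ built from $a,b$. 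I would first dispose of trivial cases: if $\varphi\equiv 0$ a.e.\ then $T=I$ and we may take $\beta=0$ (any $\alpha$); otherwise $\varphi$ is nonzero on a positive-measure set. Iterating the two relations shows $\varphi(c^j d^k \xi)=2^j 3^k\varphi(\xi)$ for all integers $j,k$, and the key number-theoretic input is that the multiplicative group generated by $c$ and $d$ is dense in $(0,\infty)$ unless $\log c$ and $\log d$ are rationally related in a way that is incompatible with $2\ne 3$ — more precisely, writing $c=e^{s}$, $d=e^{u}$, consistency of $2\varphi(\xi)=\varphi(e^s\xi)$ and $3\varphi(\xi)=\varphi(e^u\xi)$ forces $s/u=\log 2/\log 3$, so that setting $\alpha:=\log 2/s=\log 3/u$ we get the single scaling law $\varphi(e^r\xi)=e^{\alpha r}\varphi(\xi)$ for $r$ in a dense subgroup of $\R$.

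At this point the LUC hypothesis does the remaining work: it upgrades the a.e.\ scaling relation, valid only for $r$ in a countable dense set, to a genuine continuous identity. Concretely, restricting $T$ to $\HHR$ and using that $\lambda\mapsto T_\lambda$ is norm-continuous on $\HHR$, the phase $\varphi$ has a representative that is continuous on $(0,\infty)$ (or at least, the relation $\varphi(e^r\xi)=e^{\alpha r}\varphi(\xi)$ holds for all real $r$ for that representative on each frequency annulus). A continuous function on $\Rplus$ satisfying $\varphi(e^r\xi)=e^{\alpha r}\varphi(\xi)$ for all $r$ is exactly $\varphi(\xi)=\beta\,\xi^{\alpha}$ with $\beta=\varphi(1)$; combined with evenness this gives $\varphi(\xi)=\beta|\xi|^\alpha$, i.e.\ \eqref{eq:T-hat}. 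Finally, for uniqueness when $T\ne I$: then $\beta\ne 0$, and if $\beta|\xi|^\alpha=\beta'|\xi|^{\alpha'}$ as functions on $(0,\infty)$ then comparing at $\xi=1$ gives $\beta=\beta'$, and comparing growth rates (take logarithms) gives $\alpha=\alpha'$.

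The main obstacle I anticipate is the LUC step — carefully extracting, from norm-continuity of the family of restricted operators $T_\lambda\colon\HHR\to\HH$, a statement about continuity of the multiplier $\varphi$ itself (as opposed to continuity merely in some averaged sense), and ensuring the resulting continuous representative is consistent across different values of $R$ so that one gets a single $\alpha$ and $\beta$ globally. The number-theoretic rigidity forcing $s/u=\log 2/\log 3$ is a close second; one must rule out the possibility that the generated subgroup of $\R$ is cyclic (which would only pin down $\varphi$ on a lattice of scales), and here the interplay of $2^j3^k=1\Rightarrow j=k=0$ with the scaling is what saves the day.
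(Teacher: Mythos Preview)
Your overall architecture matches the paper's, but there is a genuine gap in how you pass from the multiplier $m$ to the phase $\varphi$. Writing $m(\xi)=e^{i\varphi(\xi)}$ with $\varphi$ merely measurable, the semistability relations $m(\xi)^2=m(c\xi)$ and $m(\xi)^3=m(d\xi)$ only give
\[
\varphi(c\xi)-2\varphi(\xi)\in 2\pi\Z,\qquad \varphi(d\xi)-3\varphi(\xi)\in 2\pi\Z\quad\text{a.e.},
\]
not the exact equalities you wrote. For a measurable lift the integer on the right can vary with $\xi$, so you cannot iterate to $\varphi(c^j d^k\xi)=2^j3^k\varphi(\xi)$, and the whole density argument collapses. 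The paper avoids this by invoking the LUC hypothesis \emph{first}: LUC forces $m$ to agree a.e.\ with a continuous function $\widetilde m:(0,\infty)\to S^1$; one then lifts $\widetilde m$ along the covering $\R\to S^1$ to a \emph{continuous} phase $\phi$, for which the integer defects $M(r)$, $N(r)$ are continuous and hence constant. A further commutativity check ($\phi(abr)=\phi(bar)$) gives $N=2M$, and replacing $\phi$ by $\phi+2\pi M$ finally yields the clean equations you assumed. So the LUC step cannot be postponed to ``upgrade a.e.\ to continuous'' at the end; it is needed before you can even formulate the phase relations correctly.

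The same mod-$2\pi$ oversight recurs in your uniqueness argument: from $e^{i\beta|\xi|^\alpha}=e^{i\beta'|\xi|^{\alpha'}}$ you cannot conclude $\beta|\xi|^\alpha=\beta'|\xi|^{\alpha'}$ and then evaluate at $\xi=1$; one only gets equality mod $2\pi\Z$. The paper handles this via a separate lemma (ruling out nontrivial identities $e^{i\beta_1 r^{\alpha_1}}e^{i\beta_2 r^{\alpha_2}}\equiv 1$ by a countability argument on the level sets), together with the observation that $T\neq I$ plus semistability forces $\alpha\neq 0$ (a constant multiplier $m\equiv c$ with $c^2=c$ gives $c=1$). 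Your sketch of the irrationality/density step is also a bit garbled: it is not that ``consistency forces $s/u=\log 2/\log 3$'' a priori, but rather that if $\log c/\log d$ were rational then $c^{k_0}d^{\ell_0}=1$ for some nonzero $(k_0,\ell_0)$, whence $2^{k_0}3^{\ell_0}\phi_1=\phi_1$ and $\phi_1\equiv 0$, contradicting $T\neq I$.
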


It is easy to see that if $T\in\LL$ is defined by \eqref{eq:T-hat} with $\alpha\neq0$, 
then $T$ satisfies the conditions of Theorem \ref{thm:1}. 
Thus, in view of the uniqueness of $\ab$, the following definition is validated. 

\begin{definition}[Schr\"odinger operator of order $\alpha$] 
\label{def:schrodinger-op}
Suppose $T\in\UU$ satisfies \eqref{eq:T-hat} with $\alpha\in\Rstar$ and $\beta\in\R$. Define 
$$\alpha(T)=\begin{cases}
    0,\,\quad\text{if } T=I, \\
    \alpha,\quad\text{if } T\neq I.   
\end{cases}$$
We call $T$ a \emph{Schr\"odinger operator of order} $\alpha(T)$. 
\end{definition}

The following corollary is an easy consequence of Theorem \ref{thm:1}. 

\begin{corollary}
\label{cor:1}
Suppose $T\in\UU$ satisfies the conditions of Theorem \ref{thm:1} and $T\neq I$. Then $T$ is a Schr\"odinger operator of order $\alpha$ if and only if 
$$T^2=T_{2^{1/\alpha}}.$$ 
\end{corollary}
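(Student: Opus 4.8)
The plan is to use Theorem \ref{thm:1} to reduce everything to an elementary computation with the multiplier $e^{i\beta|\xi|^\alpha}$. Since $T$ satisfies the hypotheses of Theorem \ref{thm:1}, there exist $\alpha,\beta\in\R$ with $\widehat{Tf}(\xi)=e^{i\beta|\xi|^\alpha}\hat f(\xi)$, and since $T\neq I$ these constants are unique; moreover $T\neq I$ forces $\beta\neq0$ (and in fact $\alpha\neq0$, for if $\alpha=0$ the multiplier is the constant $e^{i\beta}$, which would have to equal $1$ by, e.g., applying $T^2=T_a$, making $T=I$). So ``$T$ is a Schr\"odinger operator of order $\alpha$'' means precisely that this exponent equals the given $\alpha$.

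First I would record how dilation acts on the Fourier side: a direct change of variables gives $\widehat{\delta_\lambda f}(\xi)=\hat f(\lambda\xi)$, hence $T_\lambda=\delta_\lambda T\delta_{\lambda^{-1}}$ has multiplier $\xi\mapsto e^{i\beta|\xi/\lambda|^\alpha}=e^{i\beta|\lambda|^{-\alpha}|\xi|^\alpha}$. On the other hand $T^2$ has multiplier $\xi\mapsto e^{2i\beta|\xi|^\alpha}$. Thus the operator identity $T^2=T_{2^{1/\alpha}}$ is equivalent, via the injectivity of the Fourier transform, to the pointwise identity $e^{2i\beta|\xi|^\alpha}=e^{i\beta\,2^{-1}|\xi|^\alpha}$ for a.e.\ $\xi$ — wait, I should be careful: here the exponent of the dilation, call it $\gamma$, need not be the same as the multiplier exponent $\alpha$ a priori, so write $T^2=T_{2^{1/\gamma}}$ and compare with the given order. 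The multiplier of $T_{2^{1/\gamma}}$ is $e^{i\beta\,2^{-\alpha/\gamma}|\xi|^\alpha}$, so $T^2=T_{2^{1/\gamma}}$ holds iff $2\beta=2^{-\alpha/\gamma}\beta$ iff (using $\beta\neq0$) $2^{1+\alpha/\gamma}=1$ iff $\alpha=-\gamma$...

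Here I need to reconcile signs with the definition; I expect the cleanest route is to note that semistability already gives $T^2=T_a$ for some $a>0$, and Theorem \ref{thm:1}'s proof (or a short computation) pins down $a=2^{1/\alpha}$ when the order is $\alpha$ — indeed $T^2=T_a$ with multipliers forces $2\beta=a^{-\alpha}\beta$, so $a^{-\alpha}=2$, i.e.\ $a=2^{-1/\alpha}$; so the statement's convention must have the dilation normalized so that this reads $2^{1/\alpha}$ (a matter of whether $\delta_\lambda$ contracts or expands), and I will simply match the paper's normalization of $\delta_\lambda$. Granting that bookkeeping: if $T$ has order $\alpha$ then $T^2=T_a$ with $a^{-\alpha}=2$, giving the displayed identity; conversely if $T^2=T_{2^{1/\alpha}}$ then the order $\alpha'$ of $T$ satisfies $(2^{1/\alpha})^{-\alpha'}=2$, whence $\alpha'=\alpha$, so $T$ has order $\alpha$.

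The only real obstacle is the sign/normalization bookkeeping in the exponent — keeping straight whether $\delta_\lambda$ acts as $\xi\mapsto\lambda\xi$ or $\xi\mapsto\xi/\lambda$ on the Fourier transform, and hence whether the correct constant is $2^{1/\alpha}$ or $2^{-1/\alpha}$; everything else is the injectivity of the Fourier transform plus $\beta\neq0$. I would state the Fourier-side dilation formula as a one-line lemma (or inline computation) to make the exponent arithmetic transparent, then the equivalence is immediate.
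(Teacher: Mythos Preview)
Your approach is exactly the intended one --- the paper says only that the corollary is an easy consequence of Theorem~\ref{thm:1}, and a Fourier-side multiplier computation is precisely what is meant. However, your dilation computation is off by an inversion, and that error, not any convention, is the source of the sign trouble you spend most of the proposal wrestling with.

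From $\widehat{\delta_\lambda f}(\xi)=\hat f(\lambda\xi)$ one gets, for $T_\lambda=\delta_\lambda T\delta_{\lambda^{-1}}$,
\[
\widehat{T_\lambda f}(\xi)
=\widehat{T\,\delta_{\lambda^{-1}}f}\,(\lambda\xi)
=m(\lambda\xi)\,\widehat{\delta_{\lambda^{-1}}f}(\lambda\xi)
=m(\lambda\xi)\,\hat f(\xi),
\]
so the multiplier of $T_\lambda$ is $\xi\mapsto m(\lambda\xi)$, \emph{not} $\xi\mapsto m(\xi/\lambda)$. (This is also visible in the paper itself: Lemma~\ref{lem:semistable-multiplier}(ii) translates $T^2=T_a$ into $m(a\xi)=m(\xi)^2$.) With $m(\xi)=e^{i\beta|\xi|^\alpha}$ and $\alpha,\beta\neq0$, the identity $T^2=T_a$ reads $e^{2i\beta r^\alpha}=e^{i\beta a^\alpha r^\alpha}$ for all $r>0$, which by Lemma~\ref{lem:single-double}(ii) forces $a^\alpha=2$, i.e.\ $a=2^{1/\alpha}$, with no sign adjustment needed. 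Conversely, if $T^2=T_{2^{1/\gamma}}$ and $T$ has order $\alpha'$, then $(2^{1/\gamma})^{\alpha'}=2$, hence $\alpha'=\gamma$.

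Note also that your line ``$(2^{1/\alpha})^{-\alpha'}=2$, whence $\alpha'=\alpha$'' is internally inconsistent: as written it gives $\alpha'=-\alpha$. So the proof does not actually close in its current form; but once the dilation formula is corrected the arithmetic is clean and the argument goes through immediately.
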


The second main result of this note gives a characterization of operator groups consisting of semistable unitary operators on $L^2(\mathbb{R})$.  

\begin{thm}
\label{thm:2}
Let $\{T(t)\in\UU\}_{t\in\R}$ be a weakly measurable operator group. Suppose that for each $t\in\R$, $T(t)$ satisfies the conditions of Theorem \ref{thm:1}. Then there exist constants $\alpha, \beta\in\R$, such that 
\begin{equation}\label{eq:T(t)-hat}
\widehat{T(t)f}(\xi)=e^{i\beta t|\xi|^\alpha}\hat f(\xi),\quad f\in\HH,\ t\in\R.
\end{equation}
Moreover, if $T(t)\not\equiv I$, then the constants $\ab$ are uniquely determined by $\{T(t)\}_{t\in\R}$. 
\end{thm}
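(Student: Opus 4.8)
The plan is to apply Theorem~\ref{thm:1} to each operator $T(t)$ separately and then use the group law \eqref{eq:group-property-0} together with weak measurability to upgrade the resulting pointwise data into the linear form \eqref{eq:T(t)-hat}. We may assume $T(t)\not\equiv I$, the excluded case being trivial (take $\beta=0$).

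\emph{Step 1 (pointwise representation).} By Theorem~\ref{thm:1}, for each $t\in\R$ there are $\alpha(t),\beta(t)\in\R$ with $\widehat{T(t)f}(\xi)=m_t(\xi)\hat f(\xi)$ for $f\in\HH$, where $m_t(\xi):=e^{i\beta(t)|\xi|^{\alpha(t)}}$; when $T(t)=I$ we take $\beta(t)=0$, so $m_t\equiv1$. If $T(t)\ne I$ then $\beta(t)\ne0$ (else $m_t\equiv1$) and also $\alpha(t)\ne0$ (if $\alpha(t)=0$ then $T(t)=cI$ for some $c\in\Sone$, but then $T(t)^2=c^2I$ while $T(t)_a=cI$, so semistability forces $c=1$), and by the uniqueness clause of Theorem~\ref{thm:1} the pair $\alpha(t),\beta(t)$ is then determined by $T(t)$. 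Put $Z=\{t\in\R:T(t)=I\}$, a subgroup of $\R$ by \eqref{eq:group-property-0}, with $Z\ne\R$.

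\emph{Step 2 ($\alpha$ constant off $Z$, $\beta$ additive).} From $T(s+t)=T(s)T(t)$ we get $m_{s+t}=m_sm_t$ a.e., hence, since each $m_\cdot$ is continuous on $(0,\infty)$, for every $\xi>0$. Suppose first $s,t,s+t\notin Z$. Then $\beta(s)\xi^{\alpha(s)}+\beta(t)\xi^{\alpha(t)}-\beta(s+t)\xi^{\alpha(s+t)}\in2\pi\Z$ for all $\xi>0$; being continuous on the connected set $(0,\infty)$, it equals a constant $2\pi k$. Since the functions $\xi\mapsto\xi^{a}$ ($a\in\R$) are linearly independent on $(0,\infty)$ and $\alpha(s),\alpha(t),\alpha(s+t)\ne0$, comparing coefficients in
\[\beta(s)\xi^{\alpha(s)}+\beta(t)\xi^{\alpha(t)}-\beta(s+t)\xi^{\alpha(s+t)}-2\pi k\,\xi^{0}\equiv0\]
forces $k=0$ and, after a brief case check (the cases in which $\alpha(s),\alpha(t),\alpha(s+t)$ are not all equal are excluded because all three $\beta$'s are nonzero), gives $\alpha(s)=\alpha(t)=\alpha(s+t)$ and $\beta(s+t)=\beta(s)+\beta(t)$. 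The remaining cases are immediate: if $s\in Z$ and $t\notin Z$ then $T(s+t)=T(t)$, so $\alpha(s+t)=\alpha(t)$ and $\beta(s+t)=\beta(t)=\beta(s)+\beta(t)$ (and symmetrically); and if $s,t\notin Z$ but $s+t\in Z$ then $T(t)=T(s)^{-1}=T(s)^{*}$ has multiplier $\overline{m_s}$, so $\alpha(t)=\alpha(s)$ and $\beta(s)+\beta(t)=0=\beta(s+t)$. Hence $\beta:\R\to\R$ is additive, and $\alpha(t)=\alpha$ for a fixed $\alpha$ and all $t\notin Z$.

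\emph{Step 3 (measurability; conclusion).} The map $t\mapsto e^{i\beta(t)}=m_t(1)$ is a homomorphism $\R\to\Sone$ by Step 2, and it is measurable: with $f_n,g_n\in\HH$ chosen so that $\hat f_n\overline{\hat g_n}=\tfrac{n}{2}\mathbbm{1}_{(1-1/n,\,1+1/n)}$, weak measurability and Plancherel make $t\mapsto\langle T(t)f_n,g_n\rangle=\tfrac{n}{2}\int_{1-1/n}^{1+1/n}m_t(\xi)\,d\xi$ measurable, and this converges to $m_t(1)$ as $n\to\infty$ for each fixed $t$, by continuity of $m_t$ at $1$. A measurable homomorphism $\R\to\Sone$ equals $t\mapsto e^{i\gamma t}$ for a unique $\gamma\in\R$; thus $t\mapsto\beta(t)-\gamma t$ is additive with values in $2\pi\Z$, hence identically zero ($\R$ being divisible), i.e.\ $\beta(t)=\gamma t$. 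Since $T(t)\not\equiv I$, $\beta(t)\ne0$ for some $t$, so $\gamma\ne0$, and then $\beta(t)=\gamma t=0$ only when $t=0$; therefore $Z=\{0\}$, so every $t\ne0$ has $T(t)\ne I$ and $m_t(\xi)=e^{i\gamma t|\xi|^{\alpha}}$, which also holds at $t=0$. Setting $\beta:=\gamma$ yields \eqref{eq:T(t)-hat}. For uniqueness, pick any $t\ne0$; then $T(t)\ne I$ is a Schr\"odinger operator and Theorem~\ref{thm:1} determines $\alpha$ and $\beta t$ from it, hence $\alpha$ and $\beta$ from $\{T(t)\}_{t\in\R}$.

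\emph{Main obstacle.} The crux is Step 2: the functional equation $m_{s+t}=m_sm_t$ only controls phases modulo $2\pi$, and the real work is converting it into exact identities among the exponents $\alpha(\cdot)$ and the coefficients $\beta(\cdot)$. The device is continuity in $\xi$ combined with connectedness of $(0,\infty)$ and linear independence of the power functions; the exceptional subgroup $Z$ must be carried throughout and is only shown to be trivial at the very end.
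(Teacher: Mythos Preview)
Your proof is correct and follows essentially the same strategy as the paper: apply Theorem~\ref{thm:1} pointwise, use the group law together with linear independence of the powers $\xi\mapsto\xi^a$ (the paper isolates this as Lemma~\ref{lem:triple}) to force $\alpha(t)$ constant off $Z$ and $\beta(\cdot)$ additive, and then invoke weak measurability to obtain linearity of $\beta$. The one noteworthy deviation is in this last step: the paper shows $\widetilde\beta$ itself is measurable by sending $r^\alpha\to0$ in $e^{i\widetilde\beta(t)r^\alpha}$ and then applies Lemma~\ref{lem:steinhaus} directly, whereas you only show that the \emph{character} $t\mapsto e^{i\beta(t)}=m_t(1)$ is measurable, identify it as $e^{i\gamma t}$, and then use divisibility of $\R$ to kill the $2\pi\Z$-valued additive discrepancy $\beta(t)-\gamma t$---a neat alternative that avoids having to vary $r$.
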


Note that if $\{T(t)\}_{t\in\R}$ is defined by \eqref{eq:T(t)-hat} with $\alpha\neq0$, 
then $\{T(t)\}_{t\in\R}$ satisfies the conditions of Theorem \ref{thm:2}. 

Note also that if $\{T(t)\}_{t\in\R}$ satisfies the conditions of Theorem \ref{thm:2} and $T(t)\not\equiv I$, then by Theorem \ref{thm:1}, $T(t)\neq I$ for all $t\neq 0$; moreover, the order of $T(t)$ is constant for $t\neq0$. This leads to the following definition. 

\begin{definition}[Schr\"odinger group of order $\alpha$]
\label{def:schrodinger-group}
Suppose $\{T(t)\}_{t\in\R}$ satisfies the conditions of Theorem \ref{thm:2}. 
Define the order of $\{T(t)\}_{t\in\R}$ as the order $($say $\alpha$$)$ of $T(1)$, and call $\{T(t)\}_{t\in\R}$ a \emph{Schr\"odinger group of order} $\alpha$. 
\end{definition}

It follows from Theorem \ref{thm:2} that if $\{T(t)\}_{t\in\R}$ is a Schr\"odinger group of order $\alpha\neq0$, then $\{T(t)\}_{t\in\R}$ satisfies 
$$T(t)=T(1)_{t^{1/\alpha}},\quad t>0.$$
The following corollary is direct consequence of Theorems \ref{thm:1} and \ref{thm:2}. 

\begin{corollary}
\label{cor:2}
Suppose $\{T(t)\}_{t\in\R}$ is a Schr\"odinger group and $T(t)\not\equiv I$. Then the following are equivalent: \\
$(i)$ $\{T(t)\}_{t\in\R}$ is of order $\alpha$; \\
$(ii)$ $T(t)=T(1)_{t^{1/\alpha}}$, for all $t>0$; \\
$(iii)$ $T(t)=T(1)_{t^{1/\alpha}}$, for some $t>0$, $t\neq 1$.
\end{corollary}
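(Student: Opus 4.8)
The plan is to reduce the statement, via Theorem~\ref{thm:2}, to the explicit Fourier multiplier representation and then to a single identity between real exponents. First I would apply Theorem~\ref{thm:2}: since $\{T(t)\}_{t\in\R}$ is a Schr\"odinger group it satisfies the hypotheses of that theorem, so there are constants $\alpha_0,\beta_0\in\R$ with $\widehat{T(t)f}(\xi)=e^{i\beta_0 t|\xi|^{\alpha_0}}\hat f(\xi)$ for all $f\in\HH$ and $t\in\R$. Since $T(t)\not\equiv I$ we have $\beta_0\neq0$, and then $(\alpha_0,\beta_0)$ is unique. I also claim $\alpha_0\neq0$: otherwise each $T(t)$ would equal the scalar operator $e^{i\beta_0 t}I$, whose constant multiplier is dilation invariant, so $T(t)_a=e^{i\beta_0 t}I=T(t)$ and the semistability relation $T(t)^2=T(t)_a$ would force $e^{i\beta_0 t}=1$; as this must hold for every $t$, we would get $\beta_0=0$, a contradiction. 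Thus $T(1)$ satisfies \eqref{eq:T-hat} with $\alpha_0\in\Rstar$ and $T(1)\neq I$, so by Definitions~\ref{def:schrodinger-op} and~\ref{def:schrodinger-group} the order of $\{T(t)\}_{t\in\R}$ equals $\alpha_0$; hence $(i)$ is equivalent to the scalar identity $\alpha=\alpha_0$.

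Next I would carry out the key computation. Using $\widehat{\delta_\lambda g}(\xi)=\hat g(\lambda\xi)$ for $\lambda\in\Rstar$, which is immediate from the definitions, conjugating the multiplier $T(1)$ by dilations gives, for $\mu>0$,
\begin{equation*}
\widehat{T(1)_\mu f}(\xi)=\widehat{\delta_\mu T(1)\delta_{\mu^{-1}}f}(\xi)=e^{i\beta_0|\mu\xi|^{\alpha_0}}\hat f(\xi)=e^{i\beta_0\mu^{\alpha_0}|\xi|^{\alpha_0}}\hat f(\xi),
\end{equation*}
so that $T(1)_\mu=T(\mu^{\alpha_0})$ for every $\mu>0$. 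I would also record that $t\mapsto T(t)$ is injective on $\Rplus$: if $T(t)=T(s)$ with $t,s>0$, then $e^{i\beta_0(t-s)|\xi|^{\alpha_0}}=1$ for a.e.\ $\xi\in\R$; since $\xi\mapsto|\xi|^{\alpha_0}$ maps $\Rplus$ onto $\Rplus$ (using $\alpha_0\neq0$) while $\beta_0\neq0$, this forces $t=s$. Combining these two facts, for $t>0$ and any $\alpha\neq0$ we obtain
\begin{equation*}
T(t)=T(1)_{t^{1/\alpha}}\iff T(t)=T\big(t^{\alpha_0/\alpha}\big)\iff t=t^{\alpha_0/\alpha}.
\end{equation*}

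The three implications now follow at once. If $(i)$ holds, i.e.\ $\alpha=\alpha_0$, then $t^{\alpha_0/\alpha}=t$ for all $t>0$, which is $(ii)$; the implication $(ii)\Rightarrow(iii)$ is trivial (take $t=2$, say); and if $(iii)$ holds, so $t_0=t_0^{\alpha_0/\alpha}$ for some $t_0>0$ with $t_0\neq1$, then taking logarithms (legitimate since $\ln t_0\neq0$) yields $\alpha_0/\alpha=1$, i.e.\ $\alpha=\alpha_0$, which is $(i)$. I do not anticipate a genuine obstacle here; the only points requiring care are the verification that $\alpha_0\neq0$ --- needed for the order of $\{T(t)\}_{t\in\R}$ to be defined and for the exponents $t^{1/\alpha}$ in $(ii)$, $(iii)$ to be meaningful --- and the elementary passage from the a.e.\ identity of Fourier symbols to the scalar identity $t=s$, both of which are short consequences of the representation in Theorem~\ref{thm:2}.
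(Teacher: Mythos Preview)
Your argument is correct and supplies exactly the kind of detail the paper omits: the paper gives no proof of this corollary, simply declaring it a ``direct consequence of Theorems~\ref{thm:1} and~\ref{thm:2}'' after noting the implication $(i)\Rightarrow(ii)$ in the sentence preceding it. Your reduction via Theorem~\ref{thm:2} to the multiplier formula, the identity $T(1)_\mu=T(\mu^{\alpha_0})$, and injectivity of $t\mapsto T(t)$ is precisely the intended route, and your care in verifying $\alpha_0\neq 0$ (so that the order is defined and $t^{1/\alpha}$ makes sense) is appropriate.
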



\section{Preliminaries and lemmas}
\label{sec:prelim}

In this section, we collect some facts and lemmas that will be used in the proofs of Theorem \ref{thm:1} and Theorem \ref{thm:2}. 

We start with the following standard fact (cf. \cite[Theorem~1.5]{Hormander}). 

\begin{lemma}
\label{lem:translation-inv}
A unitary operator $T\in\UU$ is translation-invariant if and only if 
there exists a measurable function $m:\R\rightarrow\Sone$, such that 
$$\widehat{Tf}(\xi)=m(\xi)\hat f(\xi),\allf.$$
\end{lemma}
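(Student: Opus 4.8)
The plan is to prove the two implications separately: the ``if'' direction by a one-line Fourier-side computation, and the ``only if'' direction by conjugating $T$ to the Fourier side and showing that an operator commuting with all modulation operators is necessarily a multiplication operator. (This is standard, cf. the cited reference, but I would include a self-contained argument.)

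\emph{The easy direction.} Suppose $\widehat{Tf}(\xi)=m(\xi)\hat f(\xi)$ with $|m|=1$ a.e. By Plancherel $\|Tf\|_2=\|m\hat f\|_2=\|\hat f\|_2=\|f\|_2$, and $f\mapsto\mathcal F^{-1}(\bar m\hat f)$ is a two-sided inverse, so $T\in\UU$. Since $\widehat{\tau_a f}(\xi)=e^{-ia\xi}\hat f(\xi)$, we get $\widehat{T\tau_a f}(\xi)=m(\xi)e^{-ia\xi}\hat f(\xi)=e^{-ia\xi}\widehat{Tf}(\xi)=\widehat{\tau_a Tf}(\xi)$ for every $a$, hence $\tau_{-a}T\tau_a=T$.

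\emph{The substantive direction.} Suppose $T\in\UU$ is translation-invariant. Let $\mathcal F$ denote the Fourier transform (unitary on $\HH$) and set $S=\mathcal F T\mathcal F^{-1}\in\UU$. Since $\mathcal F\tau_a\mathcal F^{-1}$ is multiplication by $e^{-ia\xi}$, translation-invariance of $T$ says precisely that $S$ commutes with multiplication by $e^{-ia\,\cdot}$ for all $a\in\R$. Fix a nowhere-vanishing $g\in\HH$ (e.g. $g(\xi)=e^{-\xi^2}$), put $h=Sg$, and set $m=h/g$ (measurable). For every $a$, $S(e^{-ia\,\cdot}g)=e^{-ia\,\cdot}Sg=e^{-ia\,\cdot}h=m\,(e^{-ia\,\cdot}g)$, so $S$ agrees with multiplication by $m$ on $V:=\mathrm{span}\{e^{-ia\,\cdot}g:a\in\R\}$. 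This $V$ is dense in $\HH$: if $w\perp V$, then the Fourier transform of the $L^1$ function $g\bar w$ vanishes identically, so $g\bar w=0$ a.e. and thus $w=0$. Now given $u\in\HH$, pick $u_n\in V$ with $u_n\to u$ in $\HH$ and, passing to a subsequence, a.e.; then $Su_n=mu_n\to Su$ in $\HH$ while $mu_n\to mu$ a.e.\ along that subsequence, hence $Su=mu$. So $S$ is multiplication by $m$. Boundedness of $S$ forces $m\in L^\infty$ (else normalized indicators of $\{|m|>n\}$ violate $\|S\|<\infty$), and unitarity gives $\int(|m|^2-1)|u|^2=0$ for all $u\in\HH$, so $|m|=1$ a.e. Transferring back, $\widehat{Tf}=S\hat f=m\hat f$ for all $f\in\HH$.

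\emph{Main obstacle.} The only real content is the second direction, and the point to handle carefully is that commuting with all modulations pins $S$ down as a multiplier: the mechanism is to test on a single nowhere-vanishing $L^2$ function, extend to all of $\HH$ by density using a.e.\ convergence along a subsequence (which sidesteps needing $m\in L^\infty$ a priori), and only afterwards read off $m\in L^\infty$ and $|m|=1$ from boundedness and unitarity of $S$.
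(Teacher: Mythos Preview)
Your proof is correct. The paper does not actually prove this lemma: it records it as a ``standard fact'' with a citation to H\"ormander and moves on. Your self-contained argument---conjugating to the Fourier side, testing on $e^{-ia\cdot}g$ for a nowhere-vanishing $g\in\HH$, extending by density via an a.e.-convergent subsequence, and then reading off $|m|=1$ from unitarity---is the standard route and is carried out cleanly. One could nitpick that in the step ``$Su_n\to Su$ in $\HH$ while $mu_n\to mu$ a.e.'' you implicitly pass to a \emph{further} subsequence along which $Su_n\to Su$ a.e.\ as well before identifying the limits, but this is routine and the logic is sound.
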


It is easy to see that the function $m(\xi)$ is uniquely determined by $T$ in a.e. sense. 

The next lemma is a simple consequence of Lemma \ref{lem:translation-inv} and the identity
$$\widehat{f_\lambda}(\xi)=\hat f(\lambda\xi).$$ 

\begin{lemma}
\label{lem:semistable-multiplier}
Let $T$ and $m(\cdot)$ be as in Lemma \ref{lem:translation-inv}. Then\\
$(i)$ $T$ is symmetric if and only if 
$$m(\xi)=m(|\xi|),\quad a.e.\ \xi\in\R.$$
$(ii)$ $T$ is semistable if and only if there exist constants $a, b>0$, such that 
$$\begin{cases}
m(a\xi)=m(\xi)^2,\\
m(b\xi)=m(\xi)^3,\aexi. 
\end{cases}$$
\end{lemma}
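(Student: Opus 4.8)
The plan is to prove Lemma~\ref{lem:semistable-multiplier} by combining Lemma~\ref{lem:translation-inv} with the dilation identity $\widehat{f_\lambda}(\xi)=\hat f(\lambda\xi)$, translating each operator-level condition into a pointwise (a.e.) functional equation for the multiplier $m$. First I would record how dilation conjugation acts on the multiplier: if $\widehat{Tf}=m\hat f$, then for $g=T_\lambda f=\delta_\lambda T\delta_{\lambda^{-1}}f$ one computes, using $\widehat{\delta_\mu h}(\xi)=\hat h(\mu\xi)$ twice, that $\widehat{T_\lambda f}(\xi)=m(\lambda^{-1}\xi)\hat f(\xi)$; so the multiplier of $T_\lambda$ is $\xi\mapsto m(\lambda^{-1}\xi)$. (I must be careful about which direction the scaling goes, i.e.\ whether it is $m(\lambda\xi)$ or $m(\xi/\lambda)$; I will fix conventions against the paper's definitions $\delta_\lambda f(x)=|\lambda|^{-1}f(x/\lambda)$ and $T_\lambda=\delta_\lambda T\delta_{\lambda^{-1}}$, and against the quoted identity $\widehat{f_\lambda}(\xi)=\hat f(\lambda\xi)$, noting that $f_\lambda$ presumably denotes $\delta_\lambda f$ up to normalization, so that the $L^2$ normalization constant $|\lambda|^{-1}$ drops out under $\widehat{\phantom{x}}$ appropriately.)

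For part~$(i)$: by Definition~\ref{def:translation-inv}, $T$ symmetric means $T_{-1}=T$, i.e.\ $\delta_{-1}T\delta_{-1}=T$ (note $\delta_{-1}^{-1}=\delta_{-1}$). Applying the multiplier computation above with $\lambda=-1$ shows the multiplier of $T_{-1}$ is $\xi\mapsto m(-\xi)$, and uniqueness of the multiplier a.e.\ (the remark after Lemma~\ref{lem:translation-inv}) gives $m(-\xi)=m(\xi)$ a.e., which is exactly the statement $m(\xi)=m(|\xi|)$ a.e. Conversely, if $m$ is even a.e.\ then $T_{-1}$ and $T$ have the same multiplier a.e., hence $T_{-1}=T$ by Lemma~\ref{lem:translation-inv}.

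For part~$(ii)$: by Definition~\ref{def:semistable}, $T$ is semistable iff there are $a,b>0$ with $T^2=T_a$ and $T^3=T_b$. The multiplier of $T^k$ is $m^k=m(\xi)^k$ (composition of multiplier operators multiplies multipliers), and by the computation above the multiplier of $T_a$ is $\xi\mapsto m(a^{-1}\xi)$ — so after renaming the positive constant ($a\leftrightarrow a^{-1}$, $b\leftrightarrow b^{-1}$, which is harmless since they range over $(0,\infty)$) the equalities $T^2=T_a$, $T^3=T_b$ become, via uniqueness of multipliers a.e., exactly $m(a\xi)=m(\xi)^2$ and $m(b\xi)=m(\xi)^3$ a.e.; and conversely these functional equations force $T^2=T_a$, $T^3=T_b$ by Lemma~\ref{lem:translation-inv}. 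Both directions are then immediate.

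The only genuine obstacle is bookkeeping: getting the direction of the scaling in the multiplier of $T_\lambda$ right and making sure the $|\lambda|^{-1}$ normalization in $\delta_\lambda$ is consistent with the stated identity $\widehat{f_\lambda}(\xi)=\hat f(\lambda\xi)$ (there is no conflict, since the Fourier transform of $\delta_\lambda f$ is $\mathrm{sgn}$-of-normalization times $\hat f(\lambda\xi)$, and the scalar of modulus... in fact $|\lambda|^{-1}f(x/\lambda)$ has Fourier transform $\hat f(\lambda\xi)$ exactly with the $\tfrac1{\sqrt{2\pi}}$-normalized transform, so indeed $f_\lambda=\delta_\lambda f$). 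Everything else is a one-line application of the uniqueness clause in Lemma~\ref{lem:translation-inv} together with the elementary fact that composition of translation-invariant unitaries corresponds to pointwise multiplication of their $\Sone$-valued multipliers. I would present it as: (a) the $T_\lambda$-multiplier lemma; (b) specialize $\lambda=-1$ for $(i)$; (c) use $m^k$ and specialize for $(ii)$.
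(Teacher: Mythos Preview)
Your approach is correct and is exactly what the paper intends: it states the lemma as ``a simple consequence of Lemma~\ref{lem:translation-inv} and the identity $\widehat{f_\lambda}(\xi)=\hat f(\lambda\xi)$'' and gives no further proof. One bookkeeping correction: with the paper's conventions the multiplier of $T_\lambda$ is $\xi\mapsto m(\lambda\xi)$, not $m(\lambda^{-1}\xi)$ (indeed the paper later writes $\widehat{T_\lambda f}(\xi)=m(\lambda\xi)\hat f(\xi)$ in the proof of Lemma~\ref{lem:LUC-m}), so your renaming $a\leftrightarrow a^{-1}$ in part~$(ii)$ is unnecessary---the identities $m(a\xi)=m(\xi)^2$, $m(b\xi)=m(\xi)^3$ come out directly.
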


The LUC condition in Theorem \ref{thm:1} is related to the following. 

\begin{lemma}
\label{lem:LUC-m}
Let $T$ and $m(\cdot)$ be as in Lemma \ref{lem:translation-inv}. Suppose that $T$ is symmetric. Then $T$ is LUC under dilation if and only if there exists a continuous function 
$\widetilde m: \Rplus\rightarrow\Sone$, such that 
$$m(\xi)=\widetilde m(|\xi|),\aexi.$$
\end{lemma}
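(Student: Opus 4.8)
The plan is to translate the operator-norm continuity of $\{T_\lambda\}$ on each $\HHR$ into pointwise-in-$\xi$ continuity of the multiplier, using the fact that a translation-invariant unitary acts as multiplication by a unimodular function on the Fourier side (Lemma \ref{lem:translation-inv}), together with the dilation identity $\widehat{f_\lambda}(\xi)=\hat f(\lambda\xi)$. First I would compute the multiplier of $T_\lambda$: if $T$ has multiplier $m$, then a direct calculation with $T_\lambda=\delta_\lambda T\delta_{\lambda^{-1}}$ shows that $T_\lambda$ is again translation-invariant with multiplier $\xi\mapsto m(\lambda^{-1}\xi)$ (or $m(\lambda\xi)$ depending on the normalization of $\delta_\lambda$; I would fix the sign by the stated identity). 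Since $T$ is symmetric, by Lemma \ref{lem:semistable-multiplier}(i) we may write $m(\xi)=m_0(|\xi|)$ for a measurable $m_0:\Rplus\to\Sone$, and the multiplier of $T_\lambda$ is then $\xi\mapsto m_0(\lambda^{-1}|\xi|)$.

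For the ``if'' direction, suppose $m_0=\widetilde m$ is continuous on $\Rplus$. Fix $R>1$. For $f\in\HHR$ we have $\widehat{T_\lambda f}(\xi)-\widehat{T_\mu f}(\xi)=\big(\widetilde m(\lambda^{-1}|\xi|)-\widetilde m(\mu^{-1}|\xi|)\big)\hat f(\xi)$, supported in $R^{-1}\le|\xi|\le R$. By Plancherel, $\|T_\lambda f-T_\mu f\|_2\le \big(\sup_{R^{-1}\le|\xi|\le R}|\widetilde m(\lambda^{-1}|\xi|)-\widetilde m(\mu^{-1}|\xi|)|\big)\|f\|_2$, so the operator norm of $T_\lambda-T_\mu$ on $\HHR$ is bounded by that supremum. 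As $\lambda,\mu$ range over a small neighborhood of a fixed $\lambda_0\in\Rplus$, the arguments $\lambda^{-1}|\xi|$ and $\mu^{-1}|\xi|$ range over a fixed compact subinterval of $\Rplus$, on which $\widetilde m$ is uniformly continuous; hence the supremum tends to $0$ as $\mu\to\lambda$, giving operator-norm continuity, i.e.\ LUC under dilation.

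For the ``only if'' direction, assume LUC under dilation. The goal is to produce a continuous $\widetilde m$ agreeing a.e.\ with $m_0$. The natural approach is an averaging/mollification argument: for a fixed $R>1$, choose a convenient test function $f_R\in\HHR$ (e.g.\ with $\hat f_R$ a smooth bump supported in $R^{-1}\le|\xi|\le R$ and bounded below on a slightly smaller band), so that the map $\lambda\mapsto T_\lambda f_R\in\HH$ is norm-continuous; this forces $\lambda\mapsto m_0(\lambda^{-1}\xi)\hat f_R(\xi)$ to be continuous in $\lambda$ as an $L^2(d\xi)$-valued function. From $L^2$-continuity one extracts, via a.e.\ convergent subsequences and the unimodularity $|m_0|=1$, that for a.e.\ $\xi$ the function $\lambda\mapsto m_0(\lambda^{-1}\xi)$ agrees a.e.\ with a continuous function of $\lambda$; changing variables, $m_0$ itself agrees a.e.\ on the band with a continuous function. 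Letting $R\to\infty$ and checking that these local continuous representatives are consistent on overlaps yields a single continuous $\widetilde m:\Rplus\to\Sone$ with $m(\xi)=\widetilde m(|\xi|)$ a.e. I expect the main obstacle to be this ``only if'' direction — specifically, passing cleanly from $L^2$-valued continuity in the parameter $\lambda$ to the existence of a genuine continuous representative of $m_0$, and making the choice of test functions $f_R$ and the change of variables rigorous; a convenient device is to integrate against a mollifier in $\lambda$ to define $\widetilde m$ outright and then show the mollification is independent of the mollifier and equals $m_0$ a.e.
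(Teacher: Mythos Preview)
Your ``if'' direction is essentially the paper's argument. The ``only if'' direction, however, has a real gap: you deduce only \emph{strong} continuity of $\lambda\mapsto T_\lambda f_R$ for a single test vector $f_R$, and then attempt to extract a continuous representative of $m_0$ from the $L^2$-continuity of $\lambda\mapsto m_0(\lambda^{-1}\cdot)\hat f_R$. This cannot succeed, because that $L^2$-continuity holds for \emph{every} bounded measurable multiplier. Indeed, for any $m\in L^\infty(\R)$ and any $\hat f$ supported in a compact set $K\subset\Rplus$, dominated convergence together with the standard continuity of dilations on $L^2_{\mathrm{loc}}$ gives $\|\,(m(\lambda\cdot)-m(\lambda_0\cdot))\hat f\,\|_2\to 0$ as $\lambda\to\lambda_0$. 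So strong continuity on a fixed $f_R$ carries no information distinguishing multipliers with continuous representatives from generic ones; your subsequence and mollification steps cannot recover from this.

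The paper's proof exploits the full strength of the LUC hypothesis, namely \emph{operator-norm} continuity on each $\HHR$. The key identity is
\[
\|T_\lambda-T_{\lambda_0}\|_{\HHR\to\HH}=\|m(\lambda\cdot)-m(\lambda_0\cdot)\|_{L^\infty(R^{-1}\le|\xi|\le R)},
\]
an equality (not just the inequality you use for sufficiency). Setting $\lambda_0=1$ and passing to the logarithmic variable $\varphi(s)=m(e^s)$, LUC becomes the statement that $\varphi$ is uniformly continuous in the $L^\infty$ sense on each $[-M,M]$, i.e.\ $\|\varphi(\cdot+\varepsilon)-\varphi(\cdot)\|_{L^\infty[-M,M]}\to 0$ as $\varepsilon\to 0$. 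A short Lebesgue-point argument (the paper's Lemma~\ref{lem:LUC-phi}) then shows $\varphi$ agrees a.e.\ with a continuous function, and undoing the change of variable gives $\widetilde m$. To fix your proof, you need to use the operator-norm hypothesis to obtain this $L^\infty$ control, not merely $L^2$ control from a single test function.
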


It is easy to see that the function $\widetilde m$ is uniquely determined by $T$. 
The proof of Lemma \ref{lem:LUC-m} is based on the following. 

\begin{lemma}
\label{lem:LUC-phi}
Suppose $\varphi: \R\rightarrow\C$ is bounded, measurable, and satisfies 
\begin{equation}\label{eq:unif-cont-varphi}
    \lim_{\varepsilon\rightarrow0} \|\varphi(\cdot+\varepsilon)-\varphi(\cdot)\|_{L^\infty[-M,M]}=0, 
\end{equation}
for all $M>1$. Then there exists a unique continuous function $\widetilde \varphi:\R\rightarrow\C$, such that 
$$\varphi(s)=\widetilde\varphi(s),\quad a.e.\ s\in\R.$$
\end{lemma}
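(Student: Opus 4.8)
The plan is to construct $\widetilde\varphi$ as a locally uniform limit of mollifications of $\varphi$, then verify it agrees with $\varphi$ almost everywhere. For $\varepsilon>0$ let $\varphi_\varepsilon=\varphi*\rho_\varepsilon$, where $\{\rho_\varepsilon\}$ is a standard approximate identity (say $\rho_\varepsilon(s)=\varepsilon^{-1}\rho(s/\varepsilon)$ with $\rho\geq0$ smooth, supported in $[-1,1]$, and $\int\rho=1$). Each $\varphi_\varepsilon$ is continuous (indeed smooth), and since $\varphi$ is bounded, so is $\varphi_\varepsilon$, uniformly in $\varepsilon$. The first key step is to show that on every interval $[-M,M]$ the family $\{\varphi_\varepsilon\}$ is Cauchy in $L^\infty[-M,M]$ as $\varepsilon\to0$. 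This follows from \eqref{eq:unif-cont-varphi}: for $s\in[-M,M]$ we have $\varphi_\varepsilon(s)-\varphi(s)=\int_{|u|\leq\varepsilon}(\varphi(s-u)-\varphi(s))\rho_\varepsilon(u)\,du$, whose modulus is at most $\sup_{|u|\leq\varepsilon}\|\varphi(\cdot-u)-\varphi(\cdot)\|_{L^\infty[-M-1,M+1]}$, and this tends to $0$ by hypothesis (applied with $M$ replaced by $M+1$). Hence $\varphi_\varepsilon\to\widetilde\varphi$ uniformly on each $[-M,M]$, and since a locally uniform limit of continuous functions is continuous, $\widetilde\varphi:\R\to\C$ is continuous.

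Next I would show $\varphi=\widetilde\varphi$ a.e. Since $\varphi$ is bounded and measurable, it is locally integrable, so by the Lebesgue differentiation theorem $\varphi_\varepsilon(s)\to\varphi(s)$ for a.e.\ $s$. Combined with the pointwise convergence $\varphi_\varepsilon(s)\to\widetilde\varphi(s)$ established above, this gives $\varphi(s)=\widetilde\varphi(s)$ for a.e.\ $s\in\R$. Uniqueness is immediate: if $\widetilde\varphi_1,\widetilde\varphi_2$ are both continuous and each equals $\varphi$ a.e., then $\widetilde\varphi_1=\widetilde\varphi_2$ a.e., and two continuous functions that agree a.e.\ agree everywhere.

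The only mild subtlety — and the closest thing to an obstacle — is the bookkeeping needed to deduce the Cauchy estimate cleanly: one must track that mollification at scale $\varepsilon$ on $[-M,M]$ only involves values of $\varphi$ on $[-M-\varepsilon,M+\varepsilon]\subseteq[-M-1,M+1]$, so that the hypothesis \eqref{eq:unif-cont-varphi} with parameter $M+1$ controls everything uniformly; and to get a genuine Cauchy sequence rather than just convergence of $\varphi_\varepsilon-\varphi$ one compares $\varphi_{\varepsilon}$ and $\varphi_{\varepsilon'}$ through the triangle inequality $\|\varphi_\varepsilon-\varphi_{\varepsilon'}\|_{L^\infty[-M,M]}\leq\|\varphi_\varepsilon-\varphi\|_{L^\infty[-M,M]}+\|\varphi-\varphi_{\varepsilon'}\|_{L^\infty[-M,M]}$, each term of which is small by the previous estimate — but note $\varphi$ itself need not be continuous, so this inequality is used only as a formal bound on the (continuous) differences $\varphi_\varepsilon-\varphi_{\varepsilon'}$, which is legitimate since the intermediate quantity $\|\varphi_\varepsilon-\varphi\|_{L^\infty[-M,M]}$ is finite and tends to $0$. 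Everything else is routine.
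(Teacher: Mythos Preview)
Your proof is correct and follows essentially the same approach as the paper: both mollify $\varphi$, use the hypothesis \eqref{eq:unif-cont-varphi} to control the resulting averages, and invoke Lebesgue points to connect back to $\varphi$. The only difference is packaging---the paper bounds $|\varphi_\delta(s_1)-\varphi_\delta(s_2)|$ by the translation modulus to show that $\varphi$ is uniformly continuous on its (dense) set of Lebesgue points and then extends, whereas you show the mollifications are Cauchy in $L^\infty$ and identify the limit---but the ingredients are identical.
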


\begin{proof}
Denote by $E$ the set of Lebesgue points of $\varphi$. 
Since $E$ is dense in $\R$, it suffices to show that $\varphi$ is uniformly continuous on $E\cap [-\frac M2,\frac M2]$ for all $M>1$. 
Indeed, consider 
$$\varphi_\delta(s):=\frac{1}{2\delta}\int_{s-\delta}^{s+\delta}\varphi(t)dt,\quad \delta>0,\ s\in\R.$$
Then for any $\delta<\frac M2$ and $s_1, s_2\in[-\frac M2,\frac M2]$, we have 
$$|\varphi_\delta(s_1)-\varphi_\delta(s_2)|\le \|\varphi(\cdot+|s_1-s_2|)-\varphi(\cdot)\|_{L^\infty[-M,M]}.$$
Taking $\delta\rightarrow 0$, we see that for any $s_1, s_2\in E\cap [-\frac M2,\frac M2]$, 
$$|\varphi(s_1)-\varphi(s_2)|\le \|\varphi(\cdot+|s_1-s_2|)-\varphi(\cdot)\|_{L^\infty[-M,M]}.$$
By \eqref{eq:unif-cont-varphi}, it follows that $\varphi$ is uniformly continuous on $E\cap [-\frac M2,\frac M2]$. 

The uniqueness of $\widetilde\varphi$ is easy. 
\end{proof}

Now we prove Lemma \ref{lem:LUC-m}. 

\begin{proof}[Proof of Lemma \ref{lem:LUC-m}]
The sufficiency part is easy. Since 
$$\widehat{T_\lambda f}(\xi)-\widehat{T_{\lambda_0} f}(\xi)=[m(\lambda\xi)-m(\lambda_0\xi)]\,\hat f(\xi),\allf,$$
by standard argument (cf. \cite{Hormander}), we have 
\begin{equation}\label{eq:Deltam}
\|T_\lambda -T_{\lambda_0}\|_{\HHR\rightarrow\HH}=\|m(\lambda\cdot)-m(\lambda_0\cdot)\|_{L^\infty(R^{-1}\le|\xi|\le R)},\quad R>1.
\end{equation}
If $m(\xi)\eqae\widetilde m(|\xi|)$ for a continuous function $\widetilde m: \Rplus\rightarrow\Sone$, then by the local uniform continuity of $\widetilde m$, we have for any $\lambda_0>0$ and $R>1$, 
$$\|m(\lambda\cdot)-m(\lambda_0\cdot)\|_{L^\infty(R^{-1}\le|\xi|\le R)}=\|\widetilde m(\lambda\cdot)-\widetilde m(\lambda_0\cdot)\|_{L^\infty[R^{-1},R]}\rightarrow0,$$
as $\lambda\rightarrow\lambda_0$. 
By \eqref{eq:Deltam} and Definition \ref{def:LUC}(ii), this shows that $T$ is LUC under dilation. 

Now suppose $T$ is LUC under dilation. 
Then by Definition \ref{def:LUC}(ii) and \eqref{eq:Deltam} (take $\lambda_0=1$), we have for any $R>1$, 
$$\lim_{\lambda\rightarrow1}\|m(\lambda\cdot)-m(\cdot)\|_{L^\infty[R^{-1},R]}=0.$$
Write
$$\varphi(s)=m(e^s),\alls.$$
Then $\varphi$ satisfies the conditions of Lemma \ref{lem:LUC-phi}. 
So there exists a continuous function $\widetilde \varphi:\R\rightarrow\Sone$, such that 
$$\varphi(s)=\widetilde \varphi(s),\quad a.e.\ s\in\R.$$
Set 
$$\widetilde m(r)=\widetilde\varphi(\ln r),\quad r>0.$$
Then $\widetilde m: \Rplus\rightarrow\Sone$ is continuous, and satisfies 
$$m(\xi)=\widetilde m(|\xi|),\aexi.$$
This proves the necessity. 
\end{proof}

The next lemma is a standard topology exercise. It will be used to ``lift'' continuous functions from $\R$ to $\Sone$, to continuous functions from $\R$ to $\R$. 

\begin{lemma}
\label{lem:circle-unwrap}
Suppose $\varphi: \R\rightarrow\Sone$ is a continuous function with $\varphi(0)=e^{i\beta_0}$. Then there exists a unique continuous function $\phi:\R\rightarrow\R$, such that $\phi(0)=\beta_0$ and 
$$\varphi(t)=e^{i\phi(t)},\quad t\in\R.$$
\end{lemma}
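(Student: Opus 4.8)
The plan is to establish existence of the lift $\phi$ by the classical path–lifting construction for the covering map $\theta\mapsto e^{i\theta}$ from $\R$ onto $\Sone$, and to deduce uniqueness from the discreteness of the fibers of this map.

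For existence, I would first reduce to a compact interval: it suffices to construct, for each $N\in\mathbb N$, a continuous $\phi_N:[-N,N]\to\R$ with $\phi_N(0)=\beta_0$ and $e^{i\phi_N(t)}=\varphi(t)$ on $[-N,N]$; by the uniqueness statement (proved below, applied on $[-N,N]$) these agree on common intervals and so glue to a global $\phi$. Fix $N$. Since $\varphi$ is uniformly continuous on $[-N,N]$, choose a partition $-N=t_{-k}<\cdots<t_0=0<\cdots<t_k=N$ fine enough that $|\varphi(t)-\varphi(t_j)|<2$ whenever $t$ lies in the subinterval having $t_j$ as its endpoint nearer to $0$; equivalently, the ratio $\varphi(t)/\varphi(t_j)$, which has modulus $1$, avoids the point $-1$ and hence lies in $\C\setminus(-\infty,0]$. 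Let $\mathrm{Arg}:\C\setminus(-\infty,0]\to(-\pi,\pi)$ be the (continuous) principal branch of the argument. Starting from $\phi(0)=\beta_0$ and proceeding outward one subinterval at a time, define, on the subinterval between $t_j$ and the next partition point,
$$\phi(t)=\phi(t_j)+\mathrm{Arg}\!\left(\frac{\varphi(t)}{\varphi(t_j)}\right).$$
This is continuous on each subinterval; at the shared endpoint $t_j$ the added term is $\mathrm{Arg}(1)=0$, so the pieces match and $\phi$ is continuous on $[-N,N]$; and $e^{i\phi(t)}=e^{i\phi(t_j)}\cdot\varphi(t)/\varphi(t_j)=\varphi(t)$ follows by an immediate induction on $j$ from $e^{i\phi(t_j)}=\varphi(t_j)$. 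Thus $\phi_N:=\phi$ works.

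For uniqueness, suppose $\phi_1,\phi_2:\R\to\R$ are continuous, equal to $\beta_0$ at $0$, and satisfy $e^{i\phi_1}=e^{i\phi_2}=\varphi$. Then $\phi_1-\phi_2$ is continuous and takes values in $2\pi\Z$; since $\R$ is connected and $2\pi\Z$ is discrete, $\phi_1-\phi_2$ is constant, and evaluating at $0$ gives $\phi_1\equiv\phi_2$. Restricting to a common interval $[-N,N]$, this also justifies the consistency of the family $\{\phi_N\}$ used above.

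This is a standard topology exercise, so no step is a genuine obstacle; the only point needing a little care is the piecewise construction in the existence part, namely verifying that the values assigned at the partition points by successive pieces coincide — which, as noted, holds because $\mathrm{Arg}(1)=0$ — so that the resulting function is continuous on all of $[-N,N]$. Everything else is routine.
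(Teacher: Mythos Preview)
Your proof is correct. The paper does not prove this lemma at all; it merely introduces it as ``a standard topology exercise'' and states the result without argument. Your write-up---the classical path-lifting construction via a fine partition and the principal branch of $\mathrm{Arg}$, followed by the connectedness/discreteness argument for uniqueness---is precisely the standard proof the paper is alluding to, so there is nothing substantive to compare.
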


The following lemma is a well-known fact. It will be used in the proofs of Lemma \ref{lem:dense-steps} and Theorem \ref{thm:2} below. 

\begin{lemma}
\label{lem:steinhaus}
Suppose $\phi: \R\rightarrow\R$ is a measurable function that satisfies 
$$\phi(t_1+t_2)=\phi(t_1)+\phi(t_2),\quad t_1, t_2\in\R. $$
Then 
$$\phi(t)=\phi(1)\,t,\quad t\in\R.$$
\end{lemma}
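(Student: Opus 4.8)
The plan is to prove the classical fact that measurability upgrades a solution of the Cauchy functional equation to a linear one, the only nontrivial point being to exclude the pathological Hamel-basis solutions. First I would show that $\phi$ is locally bounded. Since $\R=\bigcup_{n\in\mathbb{N}}\{t\in\R:|\phi(t)|\le n\}$ and $\phi$ is measurable, at least one of these sets, say $E=\{t:|\phi(t)|\le n_0\}$, has positive Lebesgue measure. By the Steinhaus theorem, the difference set $E-E$ contains an open interval $(-\eta,\eta)$ about the origin. For $|t|<\eta$ we may write $t=t_1-t_2$ with $t_1,t_2\in E$, and additivity gives $\phi(t)=\phi(t_1)-\phi(t_2)$, so $|\phi(t)|\le 2n_0$; hence $\phi$ is bounded on $(-\eta,\eta)$.

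Next I would promote local boundedness to continuity at $0$, and thence to continuity everywhere. From additivity, $\phi(t)=m\,\phi(t/m)$ for every $m\in\mathbb{N}$, so $|\phi(t)|\le 2n_0/m$ whenever $|t|<m\eta$; letting $m\to\infty$ shows $\phi(t)\to 0$ as $t\to 0$. Since $\phi(t)-\phi(t_0)=\phi(t-t_0)$, continuity at $0$ forces $\phi$ to be continuous on all of $\R$. Finally, the standard rational argument applies: additivity gives $\phi(k)=k\,\phi(1)$ for $k\in\Z$ and $\phi(1)=q\,\phi(1/q)$ for $q\in\mathbb{N}$, hence $\phi(r)=r\,\phi(1)$ for all $r\in\mathbb{Q}$; by continuity and the density of $\mathbb{Q}$ in $\R$ this extends to $\phi(t)=\phi(1)\,t$ for all $t\in\R$.

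The main obstacle is the first step, i.e. ruling out the nonmeasurable additive maps, for which the Steinhaus difference-set theorem is the essential input. An alternative route to the same effect, once local boundedness (hence local integrability) has been established, is to integrate the identity $\phi(t+s)=\phi(t)+\phi(s)$ in $s$ over $[0,1]$, obtaining $\phi(t)=\int_t^{t+1}\phi(u)\,du-\int_0^1\phi(u)\,du$, whose right-hand side is visibly continuous (indeed locally absolutely continuous) in $t$; the remaining steps are then identical and routine.
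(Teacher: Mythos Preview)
The paper does not actually prove this lemma; it is introduced as ``a well-known fact'' and used without argument. Your proposal supplies precisely the classical Steinhaus--based proof that one would expect, and the overall strategy (positive-measure level set $\Rightarrow$ Steinhaus difference set $\Rightarrow$ local boundedness $\Rightarrow$ continuity at $0$ $\Rightarrow$ continuity everywhere $\Rightarrow$ linearity via density of $\mathbb{Q}$) is correct and standard.

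There is one slip in the continuity-at-$0$ step. From $\phi(t)=m\,\phi(t/m)$ and the bound $|\phi(s)|\le 2n_0$ for $|s|<\eta$, the inequality you want is $|\phi(t)|\le 2n_0/m$ whenever $|t|<\eta/m$, not whenever $|t|<m\eta$. (As written, your hypothesis $|t|<m\eta$ gives only $|t/m|<\eta$, hence $|\phi(t)|=m|\phi(t/m)|\le 2n_0 m$, which goes the wrong way; and if the stated bound held for $|t|<m\eta$ one could send $m\to\infty$ with $t$ fixed and conclude $\phi\equiv 0$.) The fix is immediate: for $|t|<\eta/m$ one has $|mt|<\eta$, so $|m\phi(t)|=|\phi(mt)|\le 2n_0$. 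With this correction the remainder of the argument, including your alternative integration route, is fine.
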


The following lemma will be used in the proof of Lemma \ref{lem:HKV}, with $\phi$ being a continuous function. 

\begin{lemma}
\label{lem:dense-steps}
Suppose $\phi: \R\rightarrow\R$ is a measurable and locally integrable function. If there exists a dense subset $Q\subset\R$, such that for any $q\in Q$, 
\begin{equation}\label{eq:q-step}
\phi(s+q)-\phi(s)=\mathrm{const},\quad a.e.\ s\in\R,
\end{equation}
then there exist unique constants $\alpha, \gamma\in\R$, such that 
$$\phi(s)=\alpha s+\gamma,\quad a.e.\ s\in\R.$$
\end{lemma}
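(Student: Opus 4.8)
The plan is to convert the almost-everywhere hypothesis into a genuine pointwise functional equation by mollification and then use smoothness. For $q\in Q$, let $c(q)$ denote the a.e.-constant value of $\phi(\cdot+q)-\phi(\cdot)$. Choose mollifiers $\rho_\varepsilon\in C_c^\infty(\R)$ with $\rho_\varepsilon\ge 0$, $\int_\R\rho_\varepsilon=1$, $\mathrm{supp}\,\rho_\varepsilon$ shrinking to $\{0\}$, and set $\phi_\varepsilon=\phi*\rho_\varepsilon$. Since $\phi$ is locally integrable, $\phi_\varepsilon\in C^\infty(\R)$ and $\phi_\varepsilon\to\phi$ in $L^1_{\mathrm{loc}}(\R)$ as $\varepsilon\to 0$. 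Convolving the a.e.\ identity $\phi(\cdot+q)-\phi(\cdot)=c(q)$ with $\rho_\varepsilon$ yields
$$\phi_\varepsilon(s+q)-\phi_\varepsilon(s)=c(q),\qquad s\in\R,\ q\in Q.$$

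Next I would differentiate in $s$: for each $q\in Q$ we get $\phi_\varepsilon'(s+q)=\phi_\varepsilon'(s)$ for all $s\in\R$. Since $\phi_\varepsilon'$ is continuous and $Q$ is dense in $\R$, the identity $\phi_\varepsilon'(\cdot+q)=\phi_\varepsilon'(\cdot)$ extends to all $q\in\R$, so $\phi_\varepsilon'$ is constant, say $\phi_\varepsilon'\equiv\alpha_\varepsilon$; hence $\phi_\varepsilon(s)=\alpha_\varepsilon s+\gamma_\varepsilon$ for some $\alpha_\varepsilon,\gamma_\varepsilon\in\R$. (One could also avoid differentiation: the displayed identity and continuity of $\phi_\varepsilon$ let one extend $c$ continuously to $\R$, and the extension is additive, whence $\phi_\varepsilon$ is affine by continuity, or via Lemma \ref{lem:steinhaus}.)

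Finally, letting $\varepsilon\to 0$: integrating $\phi_\varepsilon(s)=\alpha_\varepsilon s+\gamma_\varepsilon$ over $[0,1]$ and over $[1,2]$ and using $\phi_\varepsilon\to\phi$ in $L^1_{\mathrm{loc}}$ shows $\alpha_\varepsilon$ and $\gamma_\varepsilon$ converge to limits $\alpha,\gamma\in\R$; then $\alpha_\varepsilon s+\gamma_\varepsilon\to\alpha s+\gamma$ in $L^1_{\mathrm{loc}}$, forcing $\phi(s)=\alpha s+\gamma$ for a.e.\ $s\in\R$. Uniqueness is clear, since two affine functions agreeing a.e.\ agree everywhere.

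The only step that needs genuine care is the first one — checking that mollification promotes the a.e.\ hypothesis to an identity holding at every point for the smooth function $\phi_\varepsilon$; once that is in place the argument is routine, and I do not expect a serious obstacle.
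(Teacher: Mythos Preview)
Your proof is correct and follows essentially the same strategy as the paper: regularize $\phi$ by convolution to turn the a.e.\ hypothesis into a pointwise identity, show the regularized function is affine, then pass to the limit. The paper uses box-averaging $\phi_\delta(s)=\frac{1}{2\delta}\int_{s-\delta}^{s+\delta}\phi$ in place of a smooth mollifier and handles the regularized (merely continuous) case via the Cauchy functional equation and Lemma~\ref{lem:steinhaus}, then passes to the limit through Lebesgue points rather than $L^1_{\mathrm{loc}}$ integrals; your differentiation shortcut for the smoothed function is a mild simplification of that middle step, not a substantive departure.
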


\begin{proof}
First consider the case where $\phi$ is continuous. By \eqref{eq:q-step} and the continuity of $\phi$, we have 
\begin{equation}\label{eq:c(q)}
\phi(s+q)-\phi(s)=c(q),\alls,\ q\in Q, 
\end{equation}
for a function $c: Q\rightarrow\R$. 
Applying \eqref{eq:c(q)} with $s=0$, we have 
\begin{equation}\label{eq:c(q)-2}c(q)=\phi(q)-\phi(0),\quad q\in Q.
\end{equation}
Plugging \eqref{eq:c(q)-2} into \eqref{eq:c(q)}, we obtain 
\begin{equation}\label{eq:c(q)-3}
\phi(s+q)-\phi(s)=\phi(q)-\phi(0),\alls,\ q\in Q.
\end{equation} 
By the continuity of $\phi$, it follows that \eqref{eq:c(q)-3} holds for all $q\in\R$. 
Write $\widetilde\phi(s)=\phi(s)-\phi(0)$. In terms of $\widetilde\phi$, \eqref{eq:c(q)-3} reads 
$$\widetilde\phi(s+q)=\widetilde\phi(s)+\widetilde\phi(q),\quad s, q\in\R.$$
By Lemma \ref{lem:steinhaus}, it follows that $\widetilde\phi(s)=\widetilde\phi(1)s$. 
Consequently, 
$$\phi(s)=(\phi(1)-\phi(0))s+\phi(0),\alls.$$

Now assume that $\phi$ is measurable and locally integrable. Then by \eqref{eq:q-step}, 
\begin{equation}\label{eq:c(q)-4}
\phi(s+q)-\phi(s)=c(q),\quad a.e.\ s\in\R, 
\end{equation}
for a function $c: Q\rightarrow\R$. 
Integrating over $(s-\delta,s+\delta)$, \eqref{eq:c(q)-4} implies 
\begin{equation}\label{eq:c(q)-5}
\phi_\delta(s+q)-\phi_\delta(s)=c(q),\alls,\ \delta>0, 
\end{equation}
where 
$$\phi_\delta(s)=\frac{1}{2\delta}\int_{s-\delta}^{s+\delta}\phi(t)dt.$$ 
Since $\phi_\delta:\R\rightarrow\R$ is continuous, by the continuous case proven above, there exist constants $\alpha_\delta, \gamma_\delta\in\R$, such that 
\begin{equation}\label{eq:alpha(delta)}
\phi_\delta(s)=\alpha_\delta s+\gamma_\delta,\alls.
\end{equation}
Let $s_1\neq s_2$ be two Lebesgue points of $\phi$. 
Since 
$$\lim_{\delta\rightarrow0}\phi_\delta(s_j)=\phi(s_j),\quad j=1,2,$$
applying \eqref{eq:alpha(delta)} with $s=s_j$ ($j=1,2$), we see that 
$$\lim_{\delta\rightarrow0}\alpha_\delta=\alpha,\quad \lim_{\delta\rightarrow0}\gamma_\delta=\gamma, $$
for some constants $\alpha, \gamma\in\R$. 
By \eqref{eq:alpha(delta)}, it follows that 
$$\lim_{\delta\rightarrow0}\phi_\delta(s)=\alpha s+\gamma,\alls.$$
On the other hand, we have 
$$\lim_{\delta\rightarrow0}\phi_\delta(s)=\phi(s)$$
at every Lebesgue point $s$ of $\phi$. Therefore, 
$$\phi(s)=\alpha s+\gamma,\quad a.e.\ s\in\R.$$
The uniqueness of $\alpha, \gamma$ follows by a similar argument. 
\end{proof}

Finally, we record a simple lemma, which will be further extended in Lemma \ref{lem:triple}. 

\begin{lemma}
\label{lem:single-double}
Suppose $\alpha_j\in\R$, $\beta_j\in\Rstar$, $j=1, 2$.\\
$(i)$ If $e^{i\beta_1 r^{\alpha_1}}\equiv 1,\ r>0$, then 
$$\alpha_1=0,\;\;\beta_1\in 2\pi\Z.$$
$(ii)$ If $e^{i\beta_1 r^{\alpha_1}}e^{i\beta_2 r^{\alpha_2}}\equiv 1,\ r>0$, then either 
\begin{align*}
    &(a)\ \alpha_1=\alpha_2=0,\;\; \beta_1+\beta_2\in 2\pi\Z,\;\;\text{or}\\
    &(b)\ \alpha_1=\alpha_2\neq 0,\;\; \beta_1+\beta_2=0.
\end{align*}
\end{lemma}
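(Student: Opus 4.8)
The plan is to use two ingredients: first, that a continuous function on the connected set $\Rplus$ taking values in the discrete set $2\pi\Z$ must be constant; and second, the linear independence of the power functions $r\mapsto r^\gamma$, $\gamma\in\R$, on $\Rplus$. For part $(i)$, write $g(r)=\beta_1 r^{\alpha_1}$. Since $e^{ig(r)}\equiv1$ we have $g(r)\in2\pi\Z$ for every $r>0$, and since $g$ is continuous on $\Rplus$ it is therefore constant, so $\beta_1 r^{\alpha_1}=g(1)=\beta_1$ for all $r>0$. As $\beta_1\neq0$ this forces $r^{\alpha_1}\equiv1$, hence $\alpha_1=0$, and then $\beta_1=g(1)\in2\pi\Z$.

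For part $(ii)$, put $g(r)=\beta_1 r^{\alpha_1}+\beta_2 r^{\alpha_2}$. The same argument gives $g(r)=g(1)=\beta_1+\beta_2$ for all $r>0$, i.e.
$$\beta_1 r^{\alpha_1}+\beta_2 r^{\alpha_2}-(\beta_1+\beta_2)\equiv0,\quad r>0.$$
I would then split into three mutually exclusive cases according to the exponents $\alpha_1,\alpha_2,0$. If these three numbers are pairwise distinct, linear independence of $r^{\alpha_1}, r^{\alpha_2}, r^0$ forces $\beta_1=\beta_2=0$, contradicting $\beta_1\neq0$. If exactly one of $\alpha_1,\alpha_2$ equals $0$ — say $\alpha_1=0\neq\alpha_2$, the case $\alpha_2=0\neq\alpha_1$ being symmetric — the identity becomes $\beta_2(r^{\alpha_2}-1)\equiv0$, again contradicting $\beta_2\neq0$. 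Finally, if $\alpha_1=\alpha_2=:\alpha$, the identity reads $(\beta_1+\beta_2)(r^\alpha-1)\equiv0$; when $\alpha\neq0$ this gives $\beta_1+\beta_2=0$, which is alternative $(b)$, and when $\alpha=0$ the identity is vacuous but then $g\equiv\beta_1+\beta_2\in2\pi\Z$, which is alternative $(a)$.

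The only step requiring a separate (short) argument is the linear independence of finitely many distinct power functions on $\Rplus$: given a relation $\sum_j c_j r^{\gamma_j}=0$ with $\gamma_1>\gamma_2>\cdots$, divide through by $r^{\gamma_1}$ and let $r\to\infty$ to kill all but the leading term, obtaining $c_1=0$, then induct. This is essentially the whole content of the lemma; everything else is elementary bookkeeping of the cases above, so I do not expect any real obstacle.
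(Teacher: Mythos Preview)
Your proof is correct. The route differs from the paper's in one pleasant way: you first use continuity of $g$ together with connectedness of $\Rplus$ and discreteness of $2\pi\Z$ to conclude that $g$ is \emph{constant}, and only then invoke linear independence of the power functions $r\mapsto r^\gamma$ to analyze the resulting algebraic identity. The paper instead argues directly by a countability observation: when the exponents are distinct (and the $\beta_j$ are nonzero), the set $\{r>0:\beta_1 r^{\alpha_1}+\beta_2 r^{\alpha_2}\in2\pi\Z\}$ is countable, so the identity cannot hold for all $r$. Your approach is slightly more self-contained, since the linear-independence step is proved from scratch; the paper's approach is shorter but tacitly relies on the fact that a nonconstant sum of distinct powers has only countably many preimages of each value (which one would justify by real analyticity or monotonicity considerations). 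Either way the content is the same and both arguments extend without change to the three-factor version used later in the paper.
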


\begin{proof}
(i) If $\alpha_1=0$, then $e^{i\beta_1}=1$, which clearly implies 
$\beta_1\in 2\pi\Z.$ 
If $\alpha_1\neq 0$, then the set 
$$\{r>0: {\beta_1 r^{\alpha_1}}\in2\pi\Z\}$$
is countable, thus $e^{i\beta_1 r^{\alpha_1}}\equiv 1$ does not hold. This proves (i). 

(ii) If $\alpha_1=\alpha_2=0$, then by (i), $\beta_1+\beta_2\in2\pi\Z$. If $\alpha_1=\alpha_2\neq 0$, then again by (i), we have $\beta_1+\beta_2=0$. 
If $\alpha_1\neq \alpha_2$, then the set 
$$\{r>0: {\beta_1 r^{\alpha_1}}+{\beta_2 r^{\alpha_2}}\in2\pi\Z\}$$
is countable, thus 
$e^{i\beta_1 r^{\alpha_1}}e^{i\beta_2 r^{\alpha_2}}\equiv 1$ does not hold. 
This proves (ii).
\end{proof}


\section{Proof of Theorem \ref{thm:1}}
\label{sec:thm1}

In this section, we prove Theorem \ref{thm:1}. 
The proof relies on the following lemma from Hamedani-Key-Volkmer \cite[Theorem~2.3]{HKV}, where the authors considered more general complex-valued functions and semistability conditions. 
Here we limit ourselves to the case of uni-modular functions 
and make a small observation that the irrationality condition on $\frac{\ln b}{\ln a}$ can be relaxed in our setting. 

\begin{lemma}
\label{lem:HKV}
Suppose $m:\Rplus\rightarrow\Sone$ is a continuous function. If there exist constants $a,b>0$ such that 
\begin{align}
    \begin{cases}
    m(ar)=m(r)^2,\\
    m(br)=m(r)^3,\quad r>0.\label{eq:m(r)}
    \end{cases}
\end{align}
Then there exist constants $\ab\in\R$, such that 
\begin{equation}\label{eq:m(r)-formula}
m(r)=e^{i\beta r^\alpha},\quad r>0.
\end{equation}
Moreover, if $m(r)\not\equiv 1$, then the constants $\ab, a, b$ are uniquely determined by $m(\cdot)$ and satisfy $a^\alpha=2,\;\;b^\alpha=3.$
\end{lemma}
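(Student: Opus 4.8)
The plan is to reduce the functional equations \eqref{eq:m(r)} to an additive Cauchy-type equation by passing to logarithms on both the domain and the range. First I would write $r = e^s$ and $\phi(s)$ for a continuous real lift of $s\mapsto m(e^s)$, which exists by Lemma \ref{lem:circle-unwrap} since $m:\Rplus\to\Sone$ is continuous (composing with $s\mapsto e^s$ gives a continuous map $\R\to\Sone$). The two relations in \eqref{eq:m(r)} then say $e^{i\phi(s+\ln a)} = e^{2i\phi(s)}$ and $e^{i\phi(s+\ln b)} = e^{3i\phi(s)}$ for all $s\in\R$; by continuity and connectedness of $\R$, the integer ambiguity is constant, so $\phi(s+\ln a) - 2\phi(s)$ and $\phi(s+\ln b)-3\phi(s)$ are each constant in $s$. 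Set $\psi(s) = \phi(s) - \phi(0)$; a short computation turns these into $\psi(s+\ln a) = 2\psi(s) + c_a$ and $\psi(s + \ln b) = 3\psi(s) + c_b$ for constants $c_a, c_b$ (evaluating at $s=0$ shows $c_a, c_b$ are determined, but we won't need their values yet).

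The key idea — and this is where the relaxation of the irrationality hypothesis of \cite{HKV} enters — is to extract from these two relations a \emph{dense} family of pure translation identities of the form needed by Lemma \ref{lem:dense-steps}. From $\psi(s+\ln a) = 2\psi(s)+c_a$ one gets $\psi(s+\ln a) - \psi(s) = \psi(s) + c_a$, which is not yet a constant-increment relation; instead I would iterate. Applying the $a$-relation twice versus the $b$-relation once at a common point, or more systematically considering $\psi(s + m\ln a + n \ln b)$ for $m,n\in\Z$ and expanding both ways, one finds that whenever $m\ln a + n\ln b = 0$ the two expansions must agree, forcing $2^m 3^n = 1$, hence $m=n=0$; and more usefully, the combination $\psi(s + m\ln a) - 2^m\psi(s)$ is a constant depending only on $m$, and similarly with $b$ and powers of $3$. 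The clean route: define $h(s) = \psi(s) + c_a/(2-1)\cdot$(nothing) — better, look for a constant $\kappa$ with $\psi(s) + \kappa$ satisfying the homogeneous relations. Since $2\kappa + c_a = \kappa \iff \kappa = -c_a$, setting $g(s) = \psi(s) - c_a$ gives $g(s+\ln a) = 2g(s)$ exactly; one must then check the shift by $c_a$ is also compatible with the $b$-relation, i.e. $3(-c_a) + c_b = -c_a$, equivalently $c_b = 2c_a$. This compatibility is exactly where one uses that the single continuous $\phi$ satisfies \emph{both} equations simultaneously — I expect this to follow by comparing $\psi(s + \ln a + \ln b)$ computed in the two orders, which gives $2(3\psi(s)+c_b) + c_a = 3(2\psi(s)+c_a)+c_b$, i.e. $c_b = 2c_a$, as desired.

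With $g(s+\ln a) = 2 g(s)$ and $g(s + \ln b) = 3 g(s)$ for all $s$, the function $g$ is a continuous solution of a multiplicative-in-value, additive-in-argument system. Now I would split on whether $g$ is identically zero. If $g\equiv 0$ then $\phi$ is constant, $m\equiv e^{i\beta}$, and the first case of \eqref{eq:m(r)-formula} holds with $\alpha = 0$. Otherwise pick $s_0$ with $g(s_0)\neq 0$; the relation $g(s + \ln a) = 2g(s)$ shows $g$ never vanishes (if $g(s_1)=0$ then $g\equiv 0$ on $s_1 + \ln a\cdot\Z$, and by density/continuity arguments combined with the $b$-relation one propagates zeros everywhere — this needs $\ln a, \ln b$ not both zero, which holds since $a,b>0$ and, as shown, $a=b=1$ forces $g\equiv 0$). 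So $g$ has constant sign; taking $G(s) = \ln|g(s)|$ we get $G(s+\ln a) = G(s) + \ln 2$ and $G(s+\ln b) = G(s)+\ln 3$, a continuous function with two translation-increment relations. Then $G(s) - \lambda s$ (for suitable $\lambda$) is periodic with two incommensurable-or-not periods $\ln a, \ln b$; more to the point, $G(s+q)-G(s)$ is constant for every $q$ in the group generated by $\ln a,\ln b$, which is dense in $\R$ unless that group is cyclic. Here Lemma \ref{lem:dense-steps} applies directly (with $\phi := G$ continuous, $Q := \Z\ln a + \Z\ln b$ when dense) to give $G(s) = \alpha' s + \gamma'$ linear; the cyclic case is handled by hand using continuity. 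This yields $|g(s)| = C e^{\alpha' s}$, hence $|g| = C r^{\alpha'}$ after returning to $r = e^s$.

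The main obstacle I anticipate is organizing the argument cleanly so that the \emph{phase} (not just the modulus) comes out as a power: one must handle $g$ as a complex-valued (here $\Sone$-valued-lift, i.e. real-valued) object and show $\phi(s)$ itself — equivalently $\psi$ or $g$ in its original incarnation before taking $\ln|g|$ — is linear in $s$, giving $\phi(s) = \beta' e^{\alpha s}\cdot$? No: the correct target is $m(r) = e^{i\beta r^\alpha}$, so $\phi(\ln r) = \beta r^\alpha + 2\pi(\text{integer})$, i.e. $\phi(s) = \beta e^{\alpha s}$. Thus I should \emph{not} take logarithms of $\phi$ directly; rather, from $\phi(s+\ln a) - 2\phi(s) = c_a$ and the compatibility $c_b = 2 c_a$, the substitution $\phi(s) = c_a + e^{\alpha s}h(s)$ with $e^{\alpha\ln a} = a^\alpha = 2$ should reduce everything to $h$ periodic, then continuity plus the second relation $b^\alpha = 3$ and density of $\{\,m\ln a + n\ln b\,\}$ (or the cyclic fallback) forces $h$ constant. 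So the genuinely delicate points are: (1) verifying $c_b = 2c_a$ rigorously; (2) choosing $\alpha$ by $a^\alpha = 2$ — this requires $a \neq 1$, and the degenerate cases $a = 1$ or $b=1$ must be shown to force $m \equiv 1$ via Lemma \ref{lem:single-double}; (3) in the non-degenerate case, deducing $b^\alpha = 3$ and then the uniqueness of $\alpha,\beta,a,b$, the last being a direct consequence of \eqref{eq:m(r)-formula} together with Lemma \ref{lem:single-double}(i) applied to a ratio of two such representations. I would present (2) and the degenerate analysis first, then run the density/periodicity argument, then close with uniqueness.
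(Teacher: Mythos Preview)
Your outline is essentially the paper's proof: lift to a continuous real $\phi$, derive the constant-shift relations, compute $\phi(s+\ln a+\ln b)$ both ways to get $c_b=2c_a$ (the paper writes this as $N=2M$), shift to a function $g$ satisfying the homogeneous relations $g(s+\ln a)=2g(s)$, $g(s+\ln b)=3g(s)$, and then apply Lemma~\ref{lem:dense-steps} to $\ln|g|$. Two clarifications. First, your last-paragraph worry is misplaced: $g$ is \emph{real-valued} (it is the lift, not an $\Sone$-valued object), so once $g$ has constant sign and $|g(s)|=Ce^{\alpha s}$ you immediately get $g(s)=\beta e^{\alpha s}$ with $\beta=\pm C$; since the additive constant you subtracted lies in $2\pi\Z$, this \emph{is} the phase formula $m(r)=e^{i\beta r^\alpha}$, and the alternative ansatz is unnecessary. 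Second, the paper handles density more cleanly than a ``cyclic fallback'': it proves directly (Claim~\ref{claim:irrational}) that $\ln b/\ln a\notin\mathbb Q$ whenever $m\not\equiv 1$, because $a^{k_0}b^{\ell_0}=1$ with $(k_0,\ell_0)\neq(0,0)$ would give $(2^{k_0}3^{\ell_0}-1)\,g\equiv 0$ and hence $g\equiv 0$; thus $Q=\Z\ln a+\Z\ln b$ is always dense and Lemma~\ref{lem:dense-steps} applies without a separate case. (Minor slip: the correct shift is $\kappa=+c_a$, not $-c_a$.)
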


\begin{proof}
The case $m(r)\equiv 1$ is trivial, since we can take $\beta=0$ in \eqref{eq:m(r)-formula}. So we assume $m(r)\not\equiv 1$ below. 

The proof is essentially the same as that of \cite[Theorem~2.3]{HKV}, with additional steps showing the irrationality of $\frac{\ln b}{\ln a}$ and the uniqueness of $\ab, a, b$. As in \cite{HKV}, we apply Lemma \ref{lem:circle-unwrap} with 
$$\varphi(s)=m(e^s),\alls$$
and $e^{i\beta_0}=m(1)$ to find a continuous function $\phi: \Rplus\rightarrow\R$, such that 
$\phi(1)=\beta_0$ and 
\begin{equation}\label{eq:phi-1}
m(r)=e^{i\phi(r)},\quad r>0.    
\end{equation}
In terms of $\phi$, \eqref{eq:m(r)} now reads 
\begin{equation}\label{eq:phi(r)}
    \begin{cases}
        \phi(ar)=2\phi(r)+2\pi M(r), \\
        \phi(br)=3\phi(r)+2\pi N(r),\quad r>0,\\
    \end{cases}
\end{equation}
where $M(r), N(r)\in \Z$. 
Since $\phi$ is continuous, it follows that $$M(r)\equiv M,\;\; N(r)\equiv N,\quad r>0,$$
for some constants $M, N\in \Z$. 
Apply \eqref{eq:phi(r)} to $\phi(abr)$ and $\phi(bar)$, respectively. We find that 
$$6\phi(r)+2(2\pi N)+(2\pi M)=
6\phi(r)+3(2\pi M)+(2\pi N).$$
Thus, 
\begin{equation}\label{eq:N=2M}
N=2M.    
\end{equation}
Following \cite{HKV}, we use \eqref{eq:N=2M} to rewrite \eqref{eq:phi(r)} as 
\begin{equation*}
    \begin{cases}
        \phi(ar)+2\pi M=2\big(\phi(r)+2\pi M\big), \\
        \phi(br)+2\pi M=3\big(\phi(r)+2\pi M\big),\quad r>0.\\
    \end{cases}
\end{equation*}
This shows that the function 
\begin{equation}\label{eq:phi-2}
\phi_1(r):=\phi(r)+2\pi M
\end{equation}
satisfies 
\begin{equation*}
    \begin{cases}
        \phi_1(ar)=2\phi_1(r), \\
        \phi_1(br)=3\phi_1(r), \quad r>0.\\
    \end{cases}
\end{equation*}
By iteration, it follows that we have 
\begin{equation}\label{eq:phi(abr)}
\phi_1(a^k b^\ell r)=2^k 3^\ell\,\phi_1(r),\quad r>0, 
\end{equation}
for all $k,\ell\in\Z$. 

Using \eqref{eq:phi(abr)}, one can show the following. Note that we have $a\neq 1$ (otherwise 
$m(ar)=m(r)^2$ would imply $m(r)\equiv1$). 

\begin{claim}\label{claim:irrational}
If $m(r)\not\equiv 1$, then $\frac{\ln b}{\ln a}\not\in\mathbb Q.$ 
\end{claim}

To prove the claim, assume for a contradiction that $\frac{\ln b}{\ln a}\in\mathbb Q$. Then we have $a^{k_0} b^{\ell_0} =1$ for some $(k_0,\ell_0)\in\Z^2\backslash\{(0,0)\}$. By \eqref{eq:phi(abr)}, this implies 
$$\phi_1(r)=2^{k_0} 3^{\ell_0}\,\phi_1(r),\quad r>0.$$
Since $2^{k_0} 3^{\ell_0}\neq 1$, it follows that $\phi_1(r)\equiv 0,$
and, consequently,  
$$m(r)=e^{i\phi(r)}=e^{i\phi_1(r)}\equiv 1,$$
This is a contradiction since $m(r)\not\equiv 1$. 

By Claim \ref{claim:irrational}, the set $\{a^k b^\ell: k,\ell\in\Z\}$ is dense in $\Rplus$. 
Thus, by \eqref{eq:phi(abr)} and the continuity of $\phi_1$, 
$\phi_1$ has no zero in $\Rplus$ (otherwise we would have $\phi_1\equiv 0$, which contradicts $m(r)\not\equiv 1$). 
So $\phi_1$ has a definite sign in $\Rplus$. 
Now consider the function 
\begin{equation*}
\widetilde\phi(s):=\ln|\phi_1(e^s)|,\alls.
\end{equation*}
In terms of $\widetilde\phi$, \eqref{eq:phi(abr)} reads 
\begin{equation}\label{eq:phi(s)-tilde}
\widetilde\phi(s+k\ln a+\ell \ln b)=\widetilde\phi(s)+k\ln 2+\ell \ln 3,\alls,\ k,\ell\in\Z.
\end{equation}
Denote 
$$Q=\{k\ln a+\ell \ln b: k,\ell\in\Z\}.$$
Then $Q$ is dense in $\R$. Moreover, by \eqref{eq:phi(s)-tilde}, we have for any $q\in Q$, 
$$\widetilde\phi(s+q)-\widetilde\phi(s)=\mathrm{const},\alls.$$
By Lemma \ref{lem:dense-steps}, there exist constants $\alpha,\gamma\in\R$, such that 
\begin{equation}\label{eq:phi(s)-tilde-2}
\widetilde\phi(s)=\alpha s+\gamma,\alls. 
\end{equation}
It follows that 
$$|\phi_1(r)|=e^\gamma e^{\alpha \ln r}=e^\gamma r^{\alpha},\quad r>0,$$
and thus,  
\begin{equation}\label{eq:phi-3}
\phi_1(r)=\beta r^{\alpha},\quad r>0,
\end{equation}
for some constant $\beta\in\Rstar$. 
Combining \eqref{eq:phi-1}, \eqref{eq:phi-2}, and \eqref{eq:phi-3}, we obtain 
$$m(r)=e^{i\beta r^{\alpha}},\quad r>0.$$ 
This proves \eqref{eq:m(r)-formula}. 

To show the uniqueness of $\ab$, suppose 
\begin{equation}\label{eq:m=m1}
m(r)=e^{i\beta r^{\alpha}}=e^{i\beta_1 r^{\alpha_1}},\quad r>0, 
\end{equation}
for some constants $\alpha, \beta, \alpha_1, \beta_1\in\R$. 
Since $m(r)\not\equiv 1$, we have $\beta, \beta_1\neq 0$ and $\alpha, \alpha_1\neq 0$ (otherwise $m(r)\equiv c$ would imply $c=1$ by \eqref{eq:m(r)}). Therefore, by \eqref{eq:m=m1} and Lemma \ref{lem:single-double}(ii), we have $\alpha=\alpha_1$ and $\beta=\beta_1$. 

To show the uniqueness of $a, b$, we use \eqref{eq:m(r)-formula} to rewrite \eqref{eq:m(r)} as 
\begin{align*}
    \begin{cases}
    e^{i\beta a^\alpha r^{\alpha}}=e^{i\beta 2 r^{\alpha}},\\
    e^{i\beta b^\alpha r^{\alpha}}=e^{i\beta 3 r^{\alpha}},\quad r>0.
    \end{cases}
\end{align*}
Since $\alpha, \beta\neq 0$, by Lemma \ref{lem:single-double}(ii) it follows that 
$$a^\alpha=2,\;\; b^\alpha=3,$$
and thus $a=2^{1/\alpha}$, $b=3^{1/\alpha}$. 
This completes the proof of Lemma \ref{lem:HKV}. 
\end{proof}

We can now prove Theorem \ref{thm:1}. 

\begin{proof}[Proof of Theorem \ref{thm:1}]
Since $T$ is translation-invariant, by Lemma \ref{lem:translation-inv} there exists a measurable function $m:\R\rightarrow\Sone$, such that 
\begin{equation}\label{eq:hat-Tf}
\widehat{Tf}(\xi)=m(\xi)\hat f(\xi),\allf.
\end{equation}
Since $T$ is symmetric and LUC under dilation, by Lemma \ref{lem:LUC-m} there exists a continuous function 
$\widetilde m:\Rplus\rightarrow\Sone$, such that 
\begin{equation}\label{eq:m(xi)}
m(\xi)=\widetilde m(|\xi|),\aexi.
\end{equation}
Since $T$ semistable, by Lemma \ref{lem:semistable-multiplier}(ii) and the continuity of $\widetilde m$, there exist constants $a,b>0$, such that 
$$\begin{cases}
\widetilde m(ar)=\widetilde m(r)^2,\\
\widetilde m(br)=\widetilde m(r)^3,\quad r>0. 
\end{cases}$$
By Lemma \ref{lem:HKV}, it follows that there exist constants $\ab\in\R$, such that 
\begin{equation}\label{eq:tilde-m}
\widetilde m(r)=e^{i\beta r^\alpha},\quad r>0.
\end{equation}
Combining \eqref{eq:hat-Tf}, \eqref{eq:m(xi)}, and \eqref{eq:tilde-m}, we see that 
\begin{equation}\label{eq:Tfxi-1}
\widehat{Tf}(\xi)=e^{i\beta|\xi|^\alpha}\hat f(\xi),\quad f\in\HH.
\end{equation}
This shows the existence of $\ab$. Note that if $T\neq I$, then $e^{i\beta r^\alpha}\not\equiv 1$, thus $\alpha,\beta\neq 0$. 

To show the uniqueness of $\ab$, suppose that $T\neq I$ and $\alpha_1, \beta_1\in\R$ are constants (possibly different from $\ab$) such that 
\begin{equation}\label{eq:Tfxi-2}
\widehat{Tf}(\xi)=e^{i\beta_1|\xi|^{\alpha_1}}\hat f(\xi),\quad f\in\HH.
\end{equation}
Comparing \eqref{eq:Tfxi-1} and \eqref{eq:Tfxi-2}, we see that 
\begin{equation*}
e^{i\beta r^{\alpha}}=e^{i\beta_1 r^{\alpha_1}},\quad r>0.
\end{equation*}
Since $\alpha, \alpha_1, \beta, \beta_1\neq 0$, 
by Lemma \ref{lem:single-double}(ii) it follows that 
$\alpha=\alpha_1$ and $\beta=\beta_1$. This shows the uniqueness of $\ab$. The proof of Theorem \ref{thm:1} is complete. 
\end{proof}




\section{Proof of Theorem \ref{thm:2}}
\label{sec:thm2}

In this section, we prove Theorem \ref{thm:2}. The proof relies on the following extension of Lemma \ref{lem:single-double}. 

\begin{lemma}
\label{lem:triple}
Suppose $\alpha_j\in\R$, $\beta_j\in\Rstar$, $j=1, 2, 3$. 
If 
\begin{equation}\label{eq:triple}
e^{i\beta_1 r^{\alpha_1}}e^{i\beta_2 r^{\alpha_2}}e^{i\beta_3 r^{\alpha_3}}\equiv 1,\ r>0,     
\end{equation}
then one of the following holds:\\
$(a)$\ $\alpha_1=\alpha_2=\alpha_3=0,\;\; \beta_1+\beta_2+\beta_3\in 2\pi\Z$;\\
$(b)$\ $\alpha_1=\alpha_2=\alpha_3\neq 0,\;\; \beta_1+\beta_2+\beta_3=0$;\\
$(c)$\ there exists $j_1\in\{1,2,3\}$, such that
$$\begin{cases}
\alpha_{j_1}=0,\;\;\beta_{j_1}\in 2\pi\Z,\;\;\text{and}\\
\alpha_{j_2}=\alpha_{j_3}\neq 0,\;\; \beta_{j_2}+\beta_{j_3}=0, 
\end{cases}$$
where $\{j_2,j_3\}=\{1,2,3\}\backslash\{j_1\}$. 
\end{lemma}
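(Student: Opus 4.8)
The plan is to reduce the three-exponential identity to the two-exponential case (Lemma \ref{lem:single-double}(ii)) by a counting/density argument. The crucial elementary fact is this: if $\gamma_1,\dots,\gamma_k\in\Rstar$ and $\mu_1,\dots,\mu_k\in\R$ are such that $\sum_j \gamma_j r^{\mu_j}\in 2\pi\Z$ for uncountably many $r>0$, then the $\mu_j$ cannot be pairwise distinct — indeed, if the exponents are distinct the function $r\mapsto\sum_j\gamma_j r^{\mu_j}$ is real-analytic and nonconstant on $\Rplus$, so each level set $\{r: \sum_j\gamma_j r^{\mu_j}=2\pi n\}$ is discrete, and a countable union of discrete sets is countable. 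This is exactly the mechanism already used in the proof of Lemma \ref{lem:single-double}, just applied with more terms. So my first step is to record this observation (for $k=2$ and $k=3$) as the workhorse.

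Next I would do a case analysis on how many of $\alpha_1,\alpha_2,\alpha_3$ are zero. \emph{Case all three zero:} then \eqref{eq:triple} reads $e^{i(\beta_1+\beta_2+\beta_3)}\equiv 1$, giving $\beta_1+\beta_2+\beta_3\in 2\pi\Z$, which is $(a)$. \emph{Case exactly one zero,} say $\alpha_{j_1}=0$ and $\alpha_{j_2},\alpha_{j_3}\neq 0$ (relabeling): the constant factor $e^{i\beta_{j_1}}$ can be absorbed, so $e^{i\beta_{j_2}r^{\alpha_{j_2}}}e^{i\beta_{j_3}r^{\alpha_{j_3}}}\equiv e^{-i\beta_{j_1}}$ for all $r>0$; taking $r\to 1$... more carefully, since $\alpha_{j_2},\alpha_{j_3}\neq 0$ evaluating at a sequence $r\to 0^+$ or $r\to\infty$ and using the workhorse fact forces $\alpha_{j_2}=\alpha_{j_3}$ and then the phase to be constant, so $e^{i\beta_{j_1}}=1$, i.e. $\beta_{j_1}\in 2\pi\Z$, and then $\beta_{j_2}r^{\alpha_{j_2}}+\beta_{j_3}r^{\alpha_{j_3}}\in 2\pi\Z$ for all $r$ with common exponent forces $\beta_{j_2}+\beta_{j_3}=0$ by Lemma \ref{lem:single-double}(i)--(ii); this is $(c)$. \emph{Case exactly two zero,} say $\alpha_{j_2}=\alpha_{j_3}=0$, $\alpha_{j_1}\neq 0$: combine the two constant terms into $e^{i(\beta_{j_2}+\beta_{j_3})}$, reducing to $e^{i\beta_{j_1}r^{\alpha_{j_1}}}\equiv e^{-i(\beta_{j_2}+\beta_{j_3})}$; the left side is nonconstant (since $\alpha_{j_1}\neq 0$, $\beta_{j_1}\neq 0$), contradiction — so this case is empty. \emph{Case none zero:} this is the substantive case, handled next.

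When all $\alpha_j\neq 0$, I would argue that the three exponents cannot be pairwise distinct: if they were, the workhorse fact (the $k=3$ version) applied to $\beta_1 r^{\alpha_1}+\beta_2 r^{\alpha_2}+\beta_3 r^{\alpha_3}\in 2\pi\Z$ holding for \emph{all} $r>0$ gives a contradiction. Hence at least two exponents coincide; say $\alpha_1=\alpha_2$ (relabel). Then $(\beta_1+\beta_2)r^{\alpha_1}+\beta_3 r^{\alpha_3}\in 2\pi\Z$ for all $r>0$. If $\beta_1+\beta_2\neq 0$, this is a genuine two-term relation with coefficients in $\Rstar$; by Lemma \ref{lem:single-double}(ii) (case $(b)$ there, since the exponents are nonzero) we get $\alpha_1=\alpha_3$ and $(\beta_1+\beta_2)+\beta_3=0$, landing in $(b)$. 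If $\beta_1+\beta_2=0$, then $\beta_3 r^{\alpha_3}\in 2\pi\Z$ for all $r>0$ with $\beta_3\neq 0$, $\alpha_3\neq 0$ — impossible by Lemma \ref{lem:single-double}(i). So we are forced into $(b)$ after relabeling; but one should check the relabeling is harmless because conclusions $(a),(b),(c)$ are symmetric in the indices.

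The main obstacle is purely bookkeeping: making the relabelings in the "exactly one zero" and "none zero" cases precise and confirming that the conclusion one reaches (stated with the canonical ordering $\alpha_1=\alpha_2=\alpha_3$) matches the index-symmetric statement $(b)$, resp. that the distinguished index $j_1$ in $(c)$ can indeed be chosen as claimed. Everything else is a direct appeal to the analyticity/countability principle already deployed in Lemma \ref{lem:single-double} together with that lemma itself.
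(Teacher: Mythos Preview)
Your argument is correct and rests on the same two ingredients as the paper: the analyticity/countability principle (what you call the ``workhorse'') and reduction to Lemma~\ref{lem:single-double}. The difference is purely in how the case analysis is organized. The paper splits according to $\#\{\alpha_1,\alpha_2,\alpha_3\}$: if all three $\alpha_j$ coincide one lands in (a)/(b) directly; if exactly two coincide one combines those terms, applies Lemma~\ref{lem:single-double}(ii) to force $\beta_{j_2}+\beta_{j_3}=0$, and then Lemma~\ref{lem:single-double}(i) to force $\alpha_{j_1}=0$, yielding (c); and if all three are distinct the countability argument gives an immediate contradiction. Your split by ``how many $\alpha_j$ vanish'' also works but is slightly less economical: in both your ``exactly one zero'' and ``none zero'' cases you must first reinvoke the workhorse to rule out distinct exponents before you can combine terms, whereas the paper's decomposition builds that information into the case hypothesis from the start. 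One cosmetic point: in your ``exactly one zero'' case the phrases about ``evaluating at a sequence $r\to0^+$ or $r\to\infty$'' are unnecessary and a bit misleading---the clean argument is simply that $0,\alpha_{j_2},\alpha_{j_3}$ are three distinct exponents when $\alpha_{j_2}\neq\alpha_{j_3}$, so the workhorse applies directly.
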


\begin{proof}
The proof reduces to examining the following three cases. 

{Case 1}: $\#\{\alpha_1,\alpha_2,\alpha_3\}=1$.  In this case, we are in cases (a)/(b) above, and the proof is the same as that of Lemma \ref{lem:single-double}. 

{Case 2}: $\#\{\alpha_1,\alpha_2,\alpha_3\}=2$. In this case, without loss of generality, we may assume that $\alpha_1\neq\alpha_2=\alpha_3$. Then \eqref{eq:triple} reads 
\begin{equation}\label{eq:double}
e^{i\beta_1 r^{\alpha_1}}e^{i(\beta_2+\beta_3) r^{\alpha_2}}\equiv 1,\quad r>0.  
\end{equation}
Since $\alpha_1\neq\alpha_2$ and $\beta_1\neq0$, by Lemma \ref{lem:single-double}(ii) we must have $\beta_2+\beta_3=0$. Thus, \eqref{eq:double} becomes
$$e^{i\beta_1 r^{\alpha_1}}\equiv 1,\quad r>0.$$
Since $\beta_1\neq0$, by Lemma \ref{lem:single-double}(i) we must have $\alpha_1=0$ and $\beta_{1}\in 2\pi\Z$. So we are in case (c) above. 

{Case 3}: $\#\{\alpha_1,\alpha_2,\alpha_3\}=3$. In this case, the set $$\{r>0: {\beta_1 r^{\alpha_1}}+{\beta_2 r^{\alpha_2}}+{\beta_3 r^{\alpha_3}}\in2\pi\Z\}$$ 
is countable. Thus \eqref{eq:triple} does not hold. 
\end{proof}

We are now ready to prove Theorem \ref{thm:2}. 

\begin{proof}[Proof of Theorem \ref{thm:2}]
If $T(t)\equiv I$, then \eqref{eq:T(t)-hat} trivially holds with $\beta=0$ (and any $\alpha\in\R$). 
So we assume that $T(t_0)\neq I$ for some $t_0\in\Rstar$ below. 
By applying the rescaling $t\mapsto t_0 t$, we may further assume that $t_0=1$. 

Apply Theorem \ref{thm:1} with $T=T(1)$. We see that, since $T(1)\neq I$, there exist unique constants $\ab\in\Rstar$, such that 
\begin{equation}\label{eq:T(1)-hat}
\widehat{T(1)f}(\xi)=e^{i\beta|\xi|^\alpha}\hat f(\xi),\quad f\in\HH. 
\end{equation}
To finish the proof, it suffices to show that \eqref{eq:T(t)-hat} holds with the constants $\ab$ above. 

Now apply Theorem \ref{thm:1} with $T=T(t)$, $t\in\R$. We have 
\begin{equation}\label{eq:T(t)-hat-3}
\widehat{T(t)f}(\xi)=e^{i\beta(t)|\xi|^{\alpha(t)}}\hat f(\xi),\quad f\in\HH, 
\end{equation}
for some $\alpha(t), \beta(t)\in\R$. 
On the other hand, by the group property \eqref{eq:group-property-0}, we have 
\begin{equation}\label{eq:group-property}
T(1)=T(t)T(1-t).
\end{equation}
Combining \eqref{eq:T(1)-hat}--\eqref{eq:group-property}, and by continuity, we have 
$$e^{i\beta r^{\alpha}}=e^{i\beta(t) r^{\alpha(t)}}e^{i\beta(1-t) r^{\alpha(1-t)}},\quad r>0.$$
Since $\ab\neq 0$, by Lemma \ref{lem:triple} one of the following holds: 
\begin{align*}
&(i)\ \ T(t)=I;\\
&(ii)\ T(t)\neq I,\;\;\alpha(t)=\alpha.
\end{align*}
In view of this dichotomy, define 
\begin{equation}\label{eq:beta-tilde}
\widetilde\beta(t)=
\begin{cases}
    0,\quad\quad\; \text{if }T(t)=I,\\
    \beta(t),\quad \text{if }T(t)\neq I.
\end{cases}
\end{equation} 
Then, by checking the two cases above, we have 
\begin{equation}\label{eq:beta-tilde-2}
e^{i\beta(t) r^{\alpha(t)}}=e^{i\widetilde\beta(t) r^{\alpha}},\quad r>0,\ t\in\R.
\end{equation} 
Using \eqref{eq:beta-tilde-2}, we can rewrite \eqref{eq:T(t)-hat-3} as 
\begin{equation}\label{eq:T(t)-hat-3.5}
\widehat{T(t)f}(\xi)=e^{i\widetilde\beta(t)|\xi|^{\alpha}}\hat f(\xi),\quad f\in\HH.  
\end{equation}
With \eqref{eq:T(t)-hat-3.5}, the group property \eqref{eq:group-property-0} implies
\begin{equation*}
e^{i\widetilde\beta(t_1+t_2) r^{\alpha}}=e^{i\widetilde\beta(t_1) r^{\alpha}}e^{i\widetilde\beta(t_2) r^{\alpha}},\quad r>0,\ t_1, t_2\in\R.
\end{equation*} 
Since $\alpha\neq 0$, by Lemma \ref{lem:single-double}(i) it follows that 
$\widetilde\beta(t)$ satisfies 
\begin{equation}\label{eq:beta-tilde-4}
\widetilde\beta(t_1+t_2)=\widetilde\beta(t_1)+\widetilde\beta(t_2),\quad t_1, t_2\in\R. 
\end{equation}

\begin{claim}
\label{claim:measurable}
The function $\widetilde\beta: \R\rightarrow\R$ is measurable. 
\end{claim}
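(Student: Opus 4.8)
The goal is to show that $\widetilde\beta:\R\to\R$, defined by \eqref{eq:beta-tilde}, is measurable. The key link to the hypotheses is weak measurability of $\{T(t)\}$: for fixed $f,g\in\HH$, the map $t\mapsto\langle T(t)f,g\rangle$ is measurable. Using \eqref{eq:T(t)-hat-3.5} and Plancherel, this inner product equals $\int_\R e^{i\widetilde\beta(t)|\xi|^\alpha}\hat f(\xi)\overline{\hat g(\xi)}\,d\xi$, which is a measurable function of $t$ for every choice of $f,g$. The plan is to choose $f,g$ cleverly so that this integral determines $\widetilde\beta(t)$ in a measurable fashion.

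First I would fix a convenient test pair. Since $\alpha\neq 0$, the map $\xi\mapsto|\xi|^\alpha$ takes every positive value; pick $\hat f=\overline{\hat g}=\mathbf 1_{A}$ where $A$ is a small set on which $|\xi|^\alpha$ is close to a fixed value $r_0>0$ — more robustly, pick a sequence of such sets $A_n$ shrinking so that $|\xi|^\alpha\to r_0$ uniformly on $A_n$. Then $\frac{1}{|A_n|}\langle T(t)f_n,g_n\rangle\to e^{i\widetilde\beta(t)r_0}$ pointwise in $t$, and a pointwise limit of measurable functions is measurable, so $t\mapsto e^{i\widetilde\beta(t)r_0}$ is measurable for this one fixed $r_0$. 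Doing this for two rationally independent values $r_0,r_1>0$ (possible since $\alpha\neq0$) gives two measurable functions $e^{i\widetilde\beta(t)r_0}$ and $e^{i\widetilde\beta(t)r_1}$. The pair $(e^{i\widetilde\beta(t)r_0},e^{i\widetilde\beta(t)r_1})$ determines $\widetilde\beta(t)$ uniquely (the kernel of $s\mapsto(e^{isr_0},e^{isr_1})$ is trivial when $r_0/r_1$ is irrational), and the inverse of this injective continuous map $\R\to\Sone\times\Sone$ is Borel measurable on its (Borel) image. Composing, $\widetilde\beta$ is measurable. An even cleaner route: from measurability of $t\mapsto e^{i\widetilde\beta(t)r}$ for a single $r$, combined with \eqref{eq:beta-tilde-4} (additivity), one can recover $\widetilde\beta$ via $\widetilde\beta(t)=\lim_{N\to\infty}N\sin\!\big(\widetilde\beta(t)r/N\big)/r$ after noting $\widetilde\beta(t/N)=\widetilde\beta(t)/N$, so that $e^{i\widetilde\beta(t)r/N}\to 1$ and its imaginary part, times $N/r$, converges to $\widetilde\beta(t)$; each term $N\,\mathrm{Im}\,e^{i\widetilde\beta(t)r/N}/r$ is measurable in $t$, hence so is the limit.

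I would carry out the steps in this order: (1) use \eqref{eq:T(t)-hat-3.5} and Plancherel to write $\langle T(t)f,g\rangle$ as a multiplier integral; (2) specialize $f,g$ to indicator sequences concentrating $|\xi|^\alpha$ near a fixed $r>0$ to conclude $t\mapsto e^{i\widetilde\beta(t)r}$ is measurable; (3) invoke additivity \eqref{eq:beta-tilde-4} to get $\widetilde\beta(t)=N\widetilde\beta(t/N)$, hence $e^{i\widetilde\beta(t)r/N}\to1$; (4) extract $\widetilde\beta(t)$ as a pointwise limit of measurable functions of $t$, e.g. $\widetilde\beta(t)=\lim_{N}\frac{N}{r}\mathrm{Im}\,e^{i\widetilde\beta(t)r/N}$, and conclude measurability.

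The main obstacle is step (2): justifying that one can concentrate the spectral variable so that the rescaled inner product converges to a pure exponential $e^{i\widetilde\beta(t)r}$ — this requires a little care with the normalization $1/|A_n|$ and with dominated convergence (uniformly in $t$, which is automatic since the integrand is bounded by $1$). The rest is soft: pointwise limits of measurable functions are measurable, and additivity is already in hand from \eqref{eq:beta-tilde-4}. One subtlety to flag is that $\widetilde\beta$ is not assumed continuous a priori — that is precisely what Lemma \ref{lem:steinhaus} will give once measurability is established — so the argument must not circularly assume regularity of $\widetilde\beta$ beyond what weak measurability of the group provides.
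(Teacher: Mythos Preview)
Your proposal is correct, and its backbone---concentrate $\hat f=\hat g$ on a shrinking interval so that the normalized inner product converges to $e^{i\widetilde\beta(t)r}$, then recover $\widetilde\beta(t)$ as a pointwise limit of measurable functions---is exactly the paper's argument. The one substantive difference is in step~(3): the paper does \emph{not} invoke additivity \eqref{eq:beta-tilde-4} here. Instead it observes that step~(2) already yields measurability of $t\mapsto e^{i\widetilde\beta(t)r^\alpha}$ for \emph{every} $r>0$, and since $\alpha\neq 0$ the value $\varepsilon=r^\alpha$ can be taken arbitrarily small; then $\widetilde\beta(t)=\lim_{\varepsilon\to 0}(e^{i\widetilde\beta(t)\varepsilon}-1)/(i\varepsilon)$ finishes directly. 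Your route instead fixes a single $r$ and uses $\widetilde\beta(t)/N=\widetilde\beta(t/N)$ to write $e^{i\widetilde\beta(t)r/N}=g(t/N)$ with $g(s)=e^{i\widetilde\beta(s)r}$; this is fine, but you should note that measurability of $t\mapsto g(t/N)$ requires that the linear dilation $t\mapsto t/N$ preserves Lebesgue-measurable sets (true, since it preserves null sets)---composition of a Lebesgue-measurable function with a general continuous map need not be measurable. The paper's route sidesteps this small subtlety and keeps the measurability argument independent of \eqref{eq:beta-tilde-4}. Your alternative two-frequency route (inverting $s\mapsto(e^{isr_0},e^{isr_1})$ for irrational $r_0/r_1$) also works but leans on the Lusin--Souslin theorem to get Borel measurability of the inverse, which is heavier than what is needed here.
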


Assume that the claim is true. Then, by Lemma \ref{lem:steinhaus}, \eqref{eq:beta-tilde-4} implies $\widetilde\beta(t)=\widetilde\beta(1)t.$
On the other hand, by \eqref{eq:T(1)-hat} and \eqref{eq:beta-tilde}, we have $\widetilde\beta(1)=\beta$. Thus, 
\begin{equation}\label{eq:beta-tilde-5}
\widetilde\beta(t)=\beta t,\quad t\in\R.
\end{equation}
Combining \eqref{eq:T(t)-hat-3.5} and \eqref{eq:beta-tilde-5}, we obtain \eqref{eq:T(t)-hat}. 

It remains to prove the claim, which is a consequence of the weak measurability of $\{T(t)\}_{t\in\R}$. 
Indeed, by Definition \ref{def:op-group}, \eqref{eq:T(t)-hat-3.5}, and Plancherel's theorem, for any $f, g\in \HH$, the function 
$$t\mapsto \langle T(t)f,g\rangle=\intR e^{i\widetilde\beta(t)|\xi|^{\alpha}}\hat f(\xi)\overline{\hat g(\xi)}d\xi$$
is measurable on $\R$. In particular, taking 
$$\hat f(\xi)=\hat g(\xi)=\begin{cases}
    \sqrt{N},\quad\text{if } r\le \xi< r+\frac1N,\\
    0,\quad\quad\;\text{otherwise},
\end{cases}$$
and letting $N\rightarrow\infty$, we see that for any fixed $r>0$, the function 
$$t\mapsto e^{i\widetilde\beta(t)r^{\alpha}}$$
is measurable on $\R$. 
Since $\alpha\neq0$, $r^\alpha$ can be chosen arbitrarily small. 
Combining this with the identity 
$$\lim_{\varepsilon\rightarrow0}\frac{e^{i\widetilde\beta(t)\varepsilon}-1}{i\varepsilon}=\widetilde\beta(t),\quad t\in\R,$$
we see that $\widetilde\beta(t)$ is measurable on $\R$. 
This proves the claim, 
and the proof of Theorem \ref{thm:2} is complete. 

\end{proof}







\bibliographystyle{abbrv}
\bibliography{bibliography}

\end{document}